\documentclass[12pt]{amsart}
\usepackage{graphicx}
\usepackage{amsmath}
\usepackage{amsfonts}
\usepackage{amssymb}
\usepackage{mathtools}
\usepackage{amscd}
\usepackage[all,cmtip]{xy}
\usepackage{amsthm}
\usepackage{mathabx}
\usepackage{geometry}
\usepackage[capitalise]{cleveref}
\usepackage{xcolor}
\usepackage{comment}

\newtheorem{theorem}{Theorem}[section]
\newtheorem{proposition}[theorem]{Proposition}
\newtheorem{corollary}[theorem]{Corollary}
\newtheorem{lemma}[theorem]{Lemma}

\newtheorem{question}[theorem]{Question}

  \theoremstyle{definition}
\newtheorem{definition}[theorem]{Definition}
\newtheorem{example}[theorem]{Example}
\newtheorem{remark}[theorem]{Remark}

\newtheorem{observation}[theorem]{Observation}

\newcommand{\Z}{\mathbb{Z}}
\newcommand{\Q}{\mathbb{Q}}
\newcommand{\R}{\mathbb{R}}

\newcommand{\calA}{{\mathcal A}}

\newcommand{\calO}{{\mathcal O}}

\newcommand{\nbeq}{\begin{equation}}
\newcommand{\neeq}{\end{equation}}
\newcommand{\beq}{\begin{equation*}}
\newcommand{\eeq}{\end{equation*}}

\newcommand{\com}{_{\mathsf{com}}}

\DeclareMathOperator{\Min}{Min}
\DeclareMathOperator{\SL}{SL}
\DeclareMathOperator{\GL}{GL}
\DeclareMathOperator{\PGL}{PGL}

\DeclareMathOperator{\Hom}{Hom}

\DeclareMathOperator{\tr}{tr}
\DeclareMathOperator{\diag}{diag}

\DeclareMathOperator*{\hocolim}{hocolim}

\usepackage{mathabx,epsfig}

\begin{document}

\title{Centralizers and Classifying Spaces for Commutativity of $\SL_2(\Z[1/p])$}

\author{Omar Antol\'{\i}n-Camarena}
\address{Omar Antol\'{\i}n-Camarena, Universidad Nacional Auton\'onoma de M\'exico}
\email{omar@matem.unam.mx}
\author{Ramón Flores}
\address{Ramón Flores, Universidad de Sevilla}
\email{ramonjflores@us.es}
\author{Luis Jorge S\'anchez Salda\~na}
\address{Luis Jorge S\'anchez Salda\~na, Universidad Nacional Aut\'onoma de M\'exico}
\email{luisjorge@ciencias.unam.mx}
\begin{abstract}
    We compute the homotopy type of the classifying space for commutativity introduced by Adem, F. Cohen and Torres-Giese for the group $\SL_2(\Z[1/p])$, both as a
    discrete topological group and with the subspace topology from $\SL_2(\R)$. Doing so requires compiling detailed information on the maximal abelian subgroups of $\SL_2(\Z[1/p])$, which turn out to be precisely the centralizers of the non-central elements of the group, for which we employ methods from geometric group theory.
\end{abstract}

\maketitle

\section{Introduction}\label{sec:introduction}

Let $G$ be a topological group. For every $n\geq 0$, consider the subspace of $G^n$ given by
\[
    \operatorname{Hom}(\Z^n,G)
    \cong \{(g_1,\ldots,g_n)\in G^n\mid g_i g_j=g_j g_i\text{ for all }i,j\}.
\]
These spaces assemble into a simplicial subspace of the usual bar construction for $BG$: the face maps multiply adjacent entries or delete the first or last entry, and the degeneracy maps insert the identity element. The geometric realization of this simplicial space is called the \emph{classifying space for commutativity} \cite{AdemCohenTorresGiese} of $G$ and is denoted by $B\com G$.

The inclusion of commuting tuples into all tuples induces a a natural map from $B\com G$ to $BG$. The homotopy type of the space $E\com G$ can be described as the homotopy fiber of this map, but there is also a more precise definition of $E\com G$ up to homeomorphism, which we will recall in Section \ref{sec:prelim-com}. There is a notion of \emph{transitionally commutative principal $G$-bundle} classified by $B\com G$, and $E\com G$ is the total space of the universal such bundle.

So far most of the work on the classifying space for commutativity and the corresponding total space of the universal bundle has been carried out for Lie groups and discrete groups, and indeed most often the group is even assumed to be a compact and connected Lie group. Our goal in this paper is to test the waters outside of those extremely well-behaved topological groups and so we compute the homotopy type of $B\com G$ and $E\com G$ for the group $G = \SL_2(\Z[1/p])$, which is topologically less well-behaved, being a dense subgroup of $\SL_2(\R)$.

More precisely, if $G^\delta$ denotes $G$ equipped with the discrete topology, we first compute $B\com G^\delta$ and $E\com G^\delta$ up to homotopy equivalence, and then discuss what one obtains for $B\com G$ and $E\com G$ where $G$ carries its usual topology. We argue that the definition of $B\com G$ should probably be modified to use the fat geometric realization for groups with not-so-nice topologies, and show that for totally disconnected groups like $\SL_2(\Z[1/p])$ the modified definition will produce spaces that are weakly equivalent to $B\com G^\delta$ and $E\com G^\delta$. We show that for $G = \SL_2(\Z[1/p])$ the fat version of $B\com G$ is merely weakly equivalent to $B\com G^\delta$, not also homotopy equivalent to it; and we speculate on what would be obtained from the unmodified definition using geometric realization.

We now outline the contents of the paper. As mentioned above, Section \ref{sec:prelim-com} has some background information on classifying spaces for commutativity and on total spaces of the corresponding universal bundles. In particular it also cites the main results about them that we will use in our calculation.

Computing the classifying space for commutativity of a group requires detailed knowledge of its abelian subgroups and how they intersect. We provide the first ingredient for this in Section \ref{sec:ecom-sl2-domain}, where we show that for an integral domain $R$, the centralizers of non-central elements in $\SL_2(R)$ are abelian. Consequently, the maximal abelian subgroups of $\SL_2(R)$ are precisely the centralizers of non-central elements, and any two distinct maximal abelian subgroups intersect in the center. Using this information we prove that for any infinite integral domain $R$, we have:
\[
    E\com\SL_2(R)^\delta \simeq \bigvee_{\mathbb{N}} S^1.
\]
and similarly, that $B\com\SL_2(R)^\delta$ is a $K(\Gamma_R,1)$ for a group $\Gamma_R$ depending on the ring $R$, specifically, $\Gamma_R$ is the amalgamated product of all centralizers of non-central elements, the product being amalgamated over the center of the group.

Of course, that general description still leaves open the question of what these centralizers are exactly and the answer does depend on the specific ring $R$. We carry out the classification of the centralizers in the case of  $R = \Z[1/p]$ in Section \ref{sec:centralizers}. These centralizers are organized according to the type of the element: unipotent, split over $\Q$, real irreducible, or imaginary irreducible. In the irreducible cases the rank depends on the splitting behavior of $p$ in the associated quadratic field generated by an eigenvalue. These types are described in Section \ref{sec:prelim-sl2-localized}, prior to the aforementioned centralizer computation, and the consequences for $E\com$ and $B\com$ are collected in the short Section \ref{sec:consequences-ecom-bcom}. Namely, as already stated, $E\com \SL_2(\Z[1/p])^\delta$ (the $^\delta$ indicates we use the discrete topology) has the homotopy type of a countably infinite wedge of circles, and $B\com \SL_2(\Z[1/p])^\delta$ is a $K(\Gamma_p,1)$, where the group $\Gamma_p$ is is the amalgamated product of countably many copies of each of the following groups, amalgated over the obvious $C_2$ subgroup coming form the first factors: $C_2\times \Z[1/p]$, $C_2\times \Z$, $C_2\times \Z^2$, $C_2$, $C_4$, $C_6$, $C_4 \times \Z$, $C_6 \times \Z$.

Finally, in Section \ref{sec:topologies} we discuss the issue of the topology on $\Z[1/p]$, and how the classifying space for commutativity differs when using the usual topology versus the discrete topology. We end with a call for further investigation into classifying spaces for commutativity of groups with complicated topologies.

\subsection*{Acknowledgements}
The second author wish to thank the Department of Mathematics of the UNAM for their hospitality. He was supported by the program ``Becas de Movilidad" from the AUIP, and the project PID2024-155800NC32, funded by
the Spanish Ministry of Science, Innovation and Universities, and by FEDER, EU. The third author is grateful for the financial support of the DGAPA-UNAM grant PAPIIT~IN102426.

\section{Classifying spaces for commutativity}\label{sec:prelim-com}

As we said in the introduction, the classifying space for commutativity in a topological group is defined as the geometric realization of the simplicial space whose $n$-simplices are the $n$-tuples of pairwise commuting elements of $G$, that is, \[B\com G = |[n] \mapsto \Hom(\Z^n,G)|.\] The maps $\Hom(\Z^n, G) \to G^n$ recording the values of a homomorphism on the standard basis form a simplicial map and they geometrically realize to a map $B\com G \to BG$, whose homotopy fiber is $E\com G$.

To define $E\com$ up to homeomorphism, again one uses a simplicial model: it is the geometric realization of the simplicial space whose $n$ simplices consist of all $(n+1)$-tuples of elements of $G$ that \emph{commute affinely}, that is, tuples $(g_0, \ldots, g_n)$ such that the elements $\{g_i^{-1} g_j : 0 \le i, j\le n\}$ commute pairwise; or, equivalently, that all $g_j$ are contained in a single coset of some abelian subgroup. The face maps simply delete entries and the degeneracy maps insert identity elements, as in one of the two usual simplicial models for $EG$. The map $E\com G \to B\com G$ exhibiting $E\com G$ as the homotopy fiber of the map to $BG$ is given at the level of simplices by $(g_0, \ldots, g_n) \mapsto (g_0^{-1} g_1, g_1^{-1} g_2, \ldots, g_{n-1}^{-1} g_n)$. \cite{AdemGomez2015,AdemCohen2007}.

If $G$ is abelian, then every tuple is commuting, so $B\com G=BG$ and $E\com G$ agrees with the usual contractible total space $EG$. When $G$ is a discrete group a strong converse holds: if $E\com G$ is simply connected then $G$ is abelian (and thus $E\com G$ is even contractible). When $G$ is a compact, but not necessarily connected Lie group, there is also a strong converse (though not as strong as for discrete groups): if $\pi_k E\com G = 0$ for $k = 1, 2, 4$, then $G$ is abelian \cite{AntolinGritschacherVillarreal2023}.

There is a notion of \emph{transitionally commutative principal $G$-bundle} which is classified by $B\com G$ in the same sense $BG$ classifies principal $G$-bundles. In those terms, $E\com$ is the total space of the universal transitionally commutative principal $G$-bundle, which is a mouthful and the reason we typically refer to it using notation.

For a discrete group $G$, there is a useful poset model for $E\com G$ in terms of abelian subgroups, namely consider the following posets of subgroups or cosets thereof, in all cases ordered by inclusion:
\begin{itemize}
    \item $\operatorname{AbCo}(G)$, the poset of all cosets of abelian subgroups of $G$; and
    \item $\operatorname{mAbCo}(G)$, the subposet of $\operatorname{AbCo}(G)$ consisting of cosets $gB$, where $g\in G$ and $B$ is an intersection of maximal abelian subgroups of $G$.
\end{itemize}

There are two homeomorphic ways to turn a poset $P$ into a topological space: one can form the simplicial complex called the \emph{order complex} of $P$, whose vertices are the elements of $P$ and whose simplices are finite non-empty chains in $P$, and then take the geometric realization of that complex; or, one can think $P$ as a category with exactly one morphism $x \to y$ whenever $x \le_P y$, and take the geometric realization of the nerve of said category. For simplicity we shall call the resulting space simply the \emph{geometric realization of $P$}.

\begin{proposition}[{\cite[Proposition 2.4]{AGS}}]\label[proposition]{prop:coset-poset-model}
Given a discrete group $G$, the following CW-complexes are homotopy equivalent:
\begin{enumerate}
    \item $E\com G$;
    \item the geometric realization of $\operatorname{AbCo}(G)$;
    \item the geometric realization of $\operatorname{mAbCo}(G)$.
\end{enumerate}
\end{proposition}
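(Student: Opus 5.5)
I will prove (1)$\simeq$(2) directly from the simplicial model, and then (2)$\simeq$(3) by a retraction of posets.

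For (1)$\simeq$(2), note that for discrete $G$ the simplicial set $E\com G$ is the nerve-like object whose $n$-simplices are $(n+1)$-tuples of elements lying in a common coset of an abelian subgroup. For each coset $gA\in\operatorname{AbCo}(G)$ let $E(gA)$ be the simplicial subset of tuples with all entries in $gA$. Then $E(gA)$ is a copy of the standard model $E A$ translated by $g$, i.e.\ the full simplex $\Delta[gA]$ with vertex set $gA$ (all tuples allowed), hence contractible. These subcomplexes cover $E\com G$ by definition of affine commutativity. Moreover, the cover is closed under intersections: $E(gA)\cap E(hB)$ consists of tuples in $gA\cap hB$, which is either empty or a coset $k(A\cap B)$ of the abelian subgroup $A\cap B$, so it equals $E(k(A\cap B))$. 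Therefore $E\com G=\operatorname{colim}_{gA\in \operatorname{AbCo}(G)} E(gA)$, a colimit of a poset-indexed diagram of subcomplex inclusions. This colimit is a homotopy colimit, because the diagram is Reedy cofibrant: the latching map at $gA$ is the inclusion of the union of the $E(hB)$ with $hB\subsetneq gA$, a subcomplex inclusion. Since every term is contractible, the homotopy colimit is equivalent to the nerve of the indexing poset. Alternatively, I can invoke the nerve theorem for covers by subcomplexes closed under nonempty intersection. Either route gives $E\com G\simeq|\operatorname{AbCo}(G)|$.

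For (2)$\simeq$(3), I will use Quillen's fiber lemma, or more simply a monotone closure map. For an abelian subgroup $A$, let $\bar A$ be the intersection of all maximal abelian subgroups containing $A$. Such maximal subgroups exist by Zorn's lemma, since a union of a chain of abelian subgroups is abelian. Then $\bar A$ is abelian and contains $A$, and the operation $A\mapsto\bar A$ is monotone and idempotent. Define $r(gA)=g\bar A$. This is well defined, since $gA=hA$ implies $g^{-1}h\in A\subseteq\bar A$. It is monotone: $gA\subseteq hB$ gives $A\subseteq B$ and $g\in hB$, so $g\bar A\subseteq h\bar B$. It also satisfies $r(x)\ge x$, and $r$ lands in $\operatorname{mAbCo}(G)$ while restricting to the identity there. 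A poset map with $r\ge\mathrm{id}$ is homotopic to the identity on realizations, via the natural transformation $\mathrm{id}\Rightarrow r$. Hence the inclusion $\operatorname{mAbCo}(G)\hookrightarrow\operatorname{AbCo}(G)$ is a homotopy equivalence with inverse $r$.

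All three spaces are CW complexes: $E\com G$ is the realization of a simplicial set, and the others are realizations of nerves. So the weak equivalences obtained above are homotopy equivalences by Whitehead's theorem. The main point needing care is the first step, namely identifying $E\com G$ with the colimit and ensuring that it is a homotopy colimit. The key facts there are that the cover by the $E(gA)$ is closed under intersections and that simplices of $E\com G$ are determined by their vertex tuples.
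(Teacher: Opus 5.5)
The paper gives no proof of this statement; it imports it from \cite{AGS}. The usual argument in the literature is homotopy-theoretic. For discrete $G$ one has $E\com G\simeq\hocolim_{A\in\operatorname{Ab}(G)}G/A$, the homotopy fibre of $B\com G\simeq\hocolim_A BA\to BG$ (the same description used in the proof of \cref{thm:bcom-sl2-domain-amalgam}). By Thomason's theorem this homotopy colimit of discrete sets is the nerve of its Grothendieck construction, which is exactly the poset $\operatorname{AbCo}(G)$. One then passes to $\operatorname{mAbCo}(G)$ by cofinality. Your route is a more hands-on alternative. You work directly on the simplicial model and cover it by the contractible subcomplexes $E(gA)$, which are closed under nonempty intersection. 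You also replace the cofinality step by the explicit closure operator $gA\mapsto g\bar A$, which is precisely what makes that cofinality hold. Your approach is self-contained and avoids the hocolim formula for $E\com G$. The hocolim route is shorter once that formula is available, and it matches how the paper treats $B\com$.

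One justification needs replacing. $\operatorname{AbCo}(G)$ is not a direct (well-founded) poset once $G$ has an element $g$ of infinite order, since $\langle g\rangle\supsetneq\langle g^2\rangle\supsetneq\langle g^4\rangle\supsetneq\cdots$. So the Reedy/latching criterion you invoke is not available, and $\SL_2(\Z[1/p])$ certainly has such elements. The conclusion $\operatorname{colim}\simeq\hocolim$ is still true. Each nondegenerate simplex $(g_0,\dots,g_n)$ has a least carrier $g_0\langle g_0^{-1}g_1,\dots,g_0^{-1}g_n\rangle$, and it lies in $E(hB)$ exactly when $hB$ contains that carrier. Hence the diagram is a free, and therefore projectively cofibrant, cell diagram.

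Alternatively, your nerve-theorem route works once you add one step. The nerve theorem yields the nerve of the cover, not the order complex. But this nerve is also the nerve of the cover of $|\operatorname{AbCo}(G)|$ by the cones $|\operatorname{AbCo}(G)_{\le gA}|$, whose nonempty finite intersections are again such cones. So it is homotopy equivalent to $|\operatorname{AbCo}(G)|$.

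Finally, $\bar A$ may be an intersection of infinitely many maximal abelian subgroups. So $r$ lands in $\operatorname{mAbCo}(G)$ only if arbitrary intersections are allowed in its definition. If finite intersections are meant, use the fibre-lemma version you mention instead. The fibre $\{hB\in\operatorname{mAbCo}(G): hB\supseteq gA\}$ is nonempty by Zorn and closed under pairwise intersection. Every finite subcomplex of its realization therefore lies in a cone, so the fibre is contractible.
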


\begin{corollary}[{\cite[Corollary 4.3]{AGS}}]\label[corollary]{cor:countable-wedge-criterion}
Let $G$ be a discrete group, and denote by $Z$ the center of $G$. Assume that for every pair of distinct maximal abelian subgroups $A$ and $B$ of $G$ one has
\[
    A\cap B=Z,
\]
and assume that $Z$ has countably infinite index in $G$. Then
\[
    E\com G\simeq \bigvee_{\mathbb{N}} S^1.
\]
\end{corollary}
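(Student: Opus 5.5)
We will use \cref{prop:coset-poset-model} and analyze the realization of $\operatorname{mAbCo}(G)$ directly. Under the hypothesis, every intersection of maximal abelian subgroups is either a single maximal abelian subgroup $A$ or $Z$ itself. This uses two facts: $Z$ lies in every maximal abelian subgroup, and distinct maximal abelian subgroups meet exactly in $Z$. We should first check that $G$ is nonabelian, and in fact has at least two maximal abelian subgroups. This holds because $[G:Z]$ is infinite: an abelian $G$ would give $Z=G$.

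The elements of $\operatorname{mAbCo}(G)$ are therefore the cosets $gZ$ and the cosets $gA$ with $A$ maximal abelian. The poset has height one. A coset $gZ$ is contained in $hA$ exactly when $gZ\subseteq hA$, which means $gZ\subseteq gA = hA$. Moreover, two cosets $gA$ and $hA'$ of distinct (or equal) maximal abelian subgroups cannot be comparable unless they coincide. So the realization is a bipartite graph $\Gamma$. One side has vertex set $G/Z$. The other side has the cosets $gA$. Each vertex $gZ$ is joined to the vertices $gA$ with $A$ ranging over maximal abelian subgroups. Each vertex $gA$ is joined to the cosets $gaZ$ for $a\in A$, and there are $[A:Z]$ of these.

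Next we show that $\Gamma$ is connected. Every element of $G$ lies in some maximal abelian subgroup, by Zorn's lemma applied to the cyclic group it generates. Given $gZ$ and $gxZ$, choose a maximal abelian $A\ni x$. Then $gZ$ and $gxZ$ are both adjacent to $gA$. So all $Z$-vertices lie in one component, and every $A$-coset vertex is adjacent to some $Z$-vertex.

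A connected graph is homotopy equivalent to a wedge of circles, indexed by the edges outside a spanning tree. It remains to show this number is countably infinite. We first check that $\Gamma$ is countable. There are countably many $Z$-vertices. The number of edges is bounded by $\sum_{gZ}\#\{A\}$. The maximal abelian subgroups are countably many, since each one is determined by its image in the countable set $G/Z$. So $\Gamma$ has countably many vertices and edges.

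This is the step I expect to be the main obstacle: proving that $\Gamma$ is not a tree and has infinite first Betti number. Two sub-cases need handling. The first is when some maximal abelian $A$ has $[A:Z]\ge 2$. The second is the degenerate situation where all maximal abelian subgroups meet in $Z$ but are "small".

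Pick $x,y$ that do not commute, and take maximal abelian $A\ni x$, $B\ni y$ with $A\ne B$. Then $[A:Z]\ge2$ and $[B:Z]\ge2$. For each $g$, the four vertices $gZ$, $gA$, $gxZ$ and $gB$, together with $gxB$ and further vertices, can form a cycle. More concretely, take a cycle through $gZ$, $gA$, $gxZ$, $gxB$, $gxyZ$, and so on. A cleaner route is an Euler characteristic count on finite subgraphs. Each $Z$-vertex has degree equal to the number $m\ge2$ of maximal abelian subgroups. Each coset vertex $gA$ has degree $[A:Z]\ge2$. If $m\ge 3$, or some index is at least $3$, then the average degree exceeds $2$ on large balls, giving infinitely many independent cycles. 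If every degree equals $2$, then $\Gamma$ is a connected $2$-regular graph with infinitely many vertices, namely a line. This would force $G/Z$ to be essentially infinite dihedral-like. I would rule this out, or treat it separately, by noting that $m=2$ together with indices $2$ gives $G=A\cup B$, which is impossible for a group with $A,B$ proper subgroups.

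Having a vertex of degree at least $3$ at every $G$-translate, together with $G$ acting transitively on the $Z$-vertices, gives infinitely many independent cycles. Hence
\[
E\com G\simeq\bigvee_{\mathbb N}S^1.
\]
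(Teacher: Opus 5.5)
Your reduction is sound: under the hypothesis, $\operatorname{mAbCo}(G)$ is the bipartite graph $\Gamma$ with vertex set $G/Z\sqcup\bigsqcup_A G/A$ and one edge $gZ$---$gA$ for each $gZ$ and each maximal abelian $A$. It is connected for the reason you give, and it is countable. The paper gives no proof of its own and simply cites the reference.

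The gap is the step you flagged yourself. You never produce a cycle, and the degree-counting substitute is invalid. Claims like ``average degree exceeds $2$ on large balls'' or ``a vertex of degree $\ge 3$ in every translate, plus transitivity on $Z$-vertices, gives infinitely many independent cycles'' are false for graphs in general; a regular tree of degree $\ge 3$ is a counterexample. Worse, local data cannot settle the question here at all. Let $P$ be the amalgamated product of all the $A$'s over $Z$. Its Bass--Serre tree has vertex set $P/Z\sqcup\bigsqcup_A P/A$, exactly the same degrees as $\Gamma$, and the same transitivity on $Z$-vertices, yet it is a tree. In fact $\Gamma$ is its quotient by $\ker(P\to G)$. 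So you must use a relation that holds in $G$ but not in $P$.

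The missing step is short: close your walk using a maximal abelian subgroup $C$ containing $xy$. Write $A$ and $C$ for the cosets $1A$ and $1C$. Then $Z, A, xZ, xB, xyZ, C, Z$ is a closed edge path, because $x\in A$, $y\in B$ and $xy\in C$. Its six vertices are distinct:
\begin{itemize}
\item $xy\notin Z$, since otherwise $y\in x^{-1}Z\subseteq A$, so $y\in A\cap B=Z$;
\item $C\neq A$, since otherwise $y\in A$;
\item $C\neq B$, since otherwise $x\in B$.
\end{itemize}
So this is an embedded hexagon. Its $G$-translates are again hexagons. Each contains only three $Z$-vertices, and together they cover the infinite set $G/Z$, so infinitely many of them are distinct. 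Distinct simple cycles give distinct nonzero elements of $H_1(\Gamma;\mathbb{F}_2)$, which is the cycle space since $\Gamma$ has no $2$-cells. Hence the number of circles is infinite. This also makes your case analysis on $m$ and on $[A:Z]$ unnecessary.

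Separately, your countability argument for the set of maximal abelian subgroups does not work as written. A countable set has uncountably many subsets, so ``determined by $A/Z\subseteq G/Z$'' bounds nothing. Note too that $G$ itself may be uncountable when $Z$ is. Instead, choose $x_A\in A\setminus Z$. The map $A\mapsto x_AZ$ is injective into $G/Z$, because a non-central element lies in only one maximal abelian subgroup.
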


\section{$E\com$ and $B\com$ of $\SL_2(R)$ with $R$ an integral domain}\label{sec:ecom-sl2-domain}

Let $R$ be an integral domain and let $K$ be its field of fractions. We will use the following elementary observation to relate abelian subgroups of $\SL_2(R)$ with centralizers.

\begin{lemma}\label[lemma]{lem:matrix-centralizer-polynomial}
Let $M\in M_2(K)$ be a non-scalar matrix. Then the centralizer of $M$ in the ring $M_2(K)$ of $2 \times 2$ matrices with entries in $K$ is given by:
\[
    C_{M_2(K)}(M)=K[M].
\]
In particular, the centralizer of $M$ is a commutative ring.
\end{lemma}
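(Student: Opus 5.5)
The plan is to use the cyclic vector argument. Since $M$ is not scalar, there is a vector $v\in K^2$ that is not an eigenvector of $M$; otherwise every vector would be an eigenvector, which forces $M$ to be scalar by the standard argument. To see this, take $e_1$, $e_2$ and $e_1+e_2$ as eigenvectors; comparing eigenvalues shows they all coincide. Then $v$ and $Mv$ are linearly independent and form a basis of $K^2$. In particular $I$ and $M$ are linearly independent over $K$, so by Cayley--Hamilton $K[M]=\{aI+bM : a,b\in K\}$ is a $2$-dimensional commutative subalgebra of $M_2(K)$.

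The inclusion $K[M]\subseteq C_{M_2(K)}(M)$ is immediate, because polynomials in $M$ commute with $M$. For the reverse inclusion, let $N$ commute with $M$. Since $\{v,Mv\}$ is a basis, we can write $Nv=av+bMv$ for some $a,b\in K$. Set $N'=N-aI-bM$. Then $N'$ commutes with $M$ and $N'v=0$, so $N'(Mv)=MN'v=0$. Thus $N'$ vanishes on a basis, hence $N'=0$ and $N=aI+bM\in K[M]$.

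Commutativity of the centralizer then follows at once, since $K[M]$ is a quotient of the polynomial ring $K[x]$. There is no real obstacle here. The only point needing care is the existence of the cyclic vector $v$, which is exactly where the non-scalar hypothesis enters, so I would write that step out explicitly rather than quote it. Alternatively, one could verify the result by a direct computation with the entries of a general $N$, splitting into cases according to whether the off-diagonal entries of $M$ vanish. However, the basis argument avoids that case split, so I would use it.
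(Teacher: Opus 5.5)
Your proposal is correct and takes the same approach as the paper: choose a cyclic vector $v$ so that $\{v, Mv\}$ is a basis (this is where the non-scalar hypothesis enters), expand $Nv = av + bMv$, and use commutation with $M$ to show that $N$ and $aI+bM$ also agree on $Mv$. Subtracting to form $N' = N - aI - bM$, and checking $e_1$, $e_2$, $e_1+e_2$ to produce $v$, are only presentational refinements of that argument.
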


\begin{proof}
We prove the inclusion $C_{M_2(K)}(M)\subseteq K[M]$; the other inclusion is immediate. Let $X\in M_2(K)$ be such that $XM=MX$. Since $M$ is not a scalar matrix, there exists a vector $v\in K^2$ such that $v$ and $Mv$ are linearly independent. Indeed, if $v$ and $Mv$ were dependent for every $v$, then every vector would be an eigenvector for $M$, forcing there to be a unique eigenvalue $\lambda$ and then forcing $M = \lambda I$. Thus $\{v,Mv\}$ is a basis of $K^2$.

Write
\[
    Xv=\alpha v+\beta Mv
\]
for some $\alpha,\beta\in K$. Since $X$ commutes with $M$, we have
\[
    X(Mv)=M(Xv)=M(\alpha v+\beta Mv)=\alpha Mv+\beta M^2v.
\]

Now the operator $\alpha I+\beta M$ has the same value as $X$ on $v$:
\[
    (\alpha I+\beta M)v=\alpha v+\beta Mv=Xv,
\]
and, because it also commutes with $M$, it has the same value as $X$ on $Mv$:
\[
    (\alpha I+\beta M)(Mv)=M((\alpha I+\beta M)v)=M(Xv)=X(Mv).
\]
Therefore $X$ and $\alpha I+\beta M$ agree on the basis $\{v,Mv\}$, and so
\[
    X=\alpha I+\beta M\in K[M].
\]
Thus $C_{M_2(K)}(M)=K[M]$.
\end{proof}

We warn the reader that the previous result is false for larger matrices!

\begin{corollary}\label[corollary]{cor:centralizers-abelian}
If $M\in \SL_2(R)$ is non-central, then $C_{\SL_2(R)}(M)$, the centralizer of $M$ in the group $\SL_2(R)$, is abelian.
\end{corollary}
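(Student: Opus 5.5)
The plan is to reduce to \cref{lem:matrix-centralizer-polynomial} by viewing $\SL_2(R)$ inside the matrix ring $M_2(K)$, where $K$ is the field of fractions of $R$. Since $R$ is an integral domain, $R \subseteq K$, and so $\SL_2(R) \subseteq M_2(K)$ as a subset closed under the ring multiplication.

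First I will check that $M$ is non-scalar as an element of $M_2(K)$. If $M = \lambda I$ with $\lambda \in K$, then $\lambda$ is an entry of $M$, so $\lambda \in R$. Moreover $\det M = \lambda^2 = 1$, so $M$ would lie in the center of $\SL_2(R)$. This contradicts the hypothesis that $M$ is non-central, so $M$ is not scalar.

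Next, let $X, Y \in C_{\SL_2(R)}(M)$. Both commute with $M$ in $M_2(K)$, so \cref{lem:matrix-centralizer-polynomial} places them in $K[M]$. Concretely, $X = \alpha I + \beta M$ and $Y = \gamma I + \delta M$ with $\alpha, \beta, \gamma, \delta \in K$. Polynomials in the single matrix $M$ commute with each other, so $XY = YX$ in $M_2(K)$. Since the product in $\SL_2(R)$ is the restriction of the product in $M_2(K)$, it follows that $C_{\SL_2(R)}(M)$ is abelian.

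No step is a real obstacle. The only point that needs care is the first one: translating \emph{non-central in $\SL_2(R)$} into \emph{non-scalar in $M_2(K)$}, which needs the observation that any scalar matrix in $\SL_2(R)$ is $\pm I$ and hence central.
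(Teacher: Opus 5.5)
Your proposal is correct and follows essentially the same route as the paper: both embed $C_{\SL_2(R)}(M)$ in $C_{M_2(K)}(M)=K[M]$ via the lemma and conclude from the commutativity of $K[M]$. Your explicit check that a non-central $M$ is non-scalar in $M_2(K)$ is a step the paper leaves implicit; it follows even more directly from the fact that every scalar matrix is central, so the $\lambda^2=1$ computation is not needed.
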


\begin{proof}
The group $C_{\SL_2(R)}(M)$ is contained in the ring $C_{M_2(K)}(M)=K[M]$. Since $K[M]$ is a commutative ring, the centralizer group in $\SL_2(R)$ is also abelian.
\end{proof}

\begin{corollary}\label[corollary]{cor:maximal-abelian-centralizers}
Every maximal abelian subgroup of $\SL_2(R)$ is the centralizer of any of its non-central elements. More precisely, if $A\leq \SL_2(R)$ is maximal abelian and $M\in A\setminus Z(\SL_2(R))$, then
\[
    A=C_{\SL_2(R)}(M).
\]
Moreover, if $A$ and $B$ are distinct maximal abelian subgroups of $\SL_2(R)$, then
\[
    A\cap B=Z(\SL_2(R)).
\]
\end{corollary}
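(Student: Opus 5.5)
The plan is to derive both assertions directly from \cref{cor:centralizers-abelian}, using only maximality.

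For the first assertion, let $A$ be maximal abelian and let $M\in A\setminus Z(\SL_2(R))$. Since $A$ is abelian and contains $M$, every element of $A$ commutes with $M$, so $A\subseteq C_{\SL_2(R)}(M)$. By \cref{cor:centralizers-abelian}, $C_{\SL_2(R)}(M)$ is abelian because $M$ is non-central. Maximality of $A$ among abelian subgroups then forces $A=C_{\SL_2(R)}(M)$.

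We should also check that $A$ actually has a non-central element, so that the statement is not vacuous in the way it is used. If $A$ were contained in the center $Z$, then adjoining any non-central element $g$ would give the abelian group $\langle Z, g\rangle$, which properly contains $A$ whenever $\SL_2(R)$ is non-abelian. This contradicts maximality. In the degenerate case where $\SL_2(R)$ is abelian, the second claim is vacuous anyway. We also note that $Z\subseteq A$ for every maximal abelian $A$, since $AZ$ is abelian and contains $A$.

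For the intersection statement, let $A\neq B$ be maximal abelian. As just noted, $Z\subseteq A\cap B$. Conversely, suppose some $M\in A\cap B$ were non-central. Applying the first part to both subgroups gives $A=C_{\SL_2(R)}(M)=B$, a contradiction. Hence $A\cap B=Z(\SL_2(R))$.

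There is no real obstacle here. The only point requiring care is remembering to show that the center lies in every maximal abelian subgroup, which is needed for the inclusion $Z\subseteq A\cap B$.
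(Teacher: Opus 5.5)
Your proposal is correct and follows essentially the same argument as the paper: $A\le C_{\SL_2(R)}(M)$ together with \cref{cor:centralizers-abelian} and maximality, then $Z\subseteq A$ because $AZ$ is abelian, and finally a non-central element of $A\cap B$ would force $A=C_{\SL_2(R)}(M)=B$. Your extra check that a maximal abelian subgroup contains a non-central element is harmless but not needed for either assertion.
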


\begin{proof}
Since $A$ is abelian and contains $M$, we have $A\leq C_{\SL_2(R)}(M)$. By \cref{cor:centralizers-abelian}, $C_{\SL_2(R)}(M)$ is abelian (and is a proper subgroup of $\SL_2(R)$ since $M$ is not central), so the maximality of $A$ gives equality. If $A$ and $B$ are distinct maximal abelian subgroups and $M\in (A\cap B)\setminus Z(\SL_2(R))$, then both $A$ and $B$ are equal to $C_{\SL_2(R)}(M)$, a contradiction. Hence their intersection is contained in the center, and the reverse inclusion is automatic: indeed, every maximal abelian subgroup must contain the center $Z$ because $\langle A \cup Z \rangle$ is also abelian and contains $A$. 
\end{proof}

Let $\calA_{\max}$ denote the collection of maximal abelian subgroups of $\SL_2(R)$, and write
\[
    Z=Z(\SL_2(R)).
\]
\Cref{cor:maximal-abelian-centralizers} shows that distinct members of $\calA_{\max}$ intersect exactly in $Z$. Thus the poset of maximal abelian subgroups and their intersections is $\calA_{\max} \cup \{Z\}$, with minimum element $Z$ contained in all the other elements, which are pairwise incomparable.

We will now use the preceding information to compute the homotopy tyhpes of the spaces $E\com \SL_2(R)$ and $B\com \SL_2(R)$. For these calculations we consider $\SL_2(R)$ as a discrete group.

\begin{theorem}\label[theorem]{prop:ecom-sl2-domain-criterion}
Let $R$ be a countably infinite integral domain. Then
\[
    E\com(\SL_2(R))\simeq \bigvee_{\mathbb{N}} S^1.
\]
\end{theorem}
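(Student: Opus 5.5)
The plan is to apply \cref{cor:countable-wedge-criterion} directly to the discrete group $G=\SL_2(R)$. That corollary has two hypotheses. First, distinct maximal abelian subgroups must intersect exactly in the center $Z$. Second, $Z$ must have countably infinite index in $G$. The first hypothesis is precisely the second half of \cref{cor:maximal-abelian-centralizers}, so nothing further is needed there. One small point deserves a check: \cref{cor:countable-wedge-criterion} implicitly assumes that maximal abelian subgroups exist and cover $G$. Every element $g\in G$ lies in some maximal abelian subgroup. For non-central $g$ this can be taken to be $C_{G}(g)$, which is abelian by \cref{cor:centralizers-abelian}; it is maximal because any abelian group containing it centralizes $g$. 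Central elements lie in every maximal abelian subgroup. So no Zorn's lemma argument is needed.

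The remaining work is the index computation. First I will show that $Z$ is finite. If $\lambda I\in \SL_2(R)$ is central, then $\lambda^2=1$ in the domain $R$, so $\lambda=\pm1$. Conversely, a central matrix must commute with the elementary matrices $\begin{pmatrix}1&1\\0&1\end{pmatrix}$ and $\begin{pmatrix}1&0\\1&1\end{pmatrix}$, and a short computation forces it to be scalar. Hence $Z\subseteq\{\pm I\}$ and $|Z|\le 2$. Next I will show that $G$ is countably infinite. It is countable because it is a subset of $R^4$ with $R$ countable. It is infinite because the matrices $\begin{pmatrix}1&r\\0&1\end{pmatrix}$ for $r\in R$ are pairwise distinct and $R$ is infinite. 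Since $Z$ is finite and $G$ is countably infinite, the index $[G:Z]$ is countably infinite.

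With both hypotheses verified, \cref{cor:countable-wedge-criterion} gives $E\com(\SL_2(R))\simeq\bigvee_{\mathbb N}S^1$. The only place requiring some care is the description of the center, specifically showing that every central element of $\SL_2(R)$ is scalar using only elements defined over $R$. This is settled by the explicit commutation with the two elementary unipotent matrices above. Everything else is bookkeeping.
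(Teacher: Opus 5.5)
Your proposal is correct and follows essentially the same route as the paper: use \cref{cor:maximal-abelian-centralizers} for the intersection hypothesis, observe that the center $\{\pm I\}$ is finite in the countably infinite group $\SL_2(R)$ so its index is countably infinite, and apply \cref{cor:countable-wedge-criterion}. The paper takes the center computation and the countability of $\SL_2(R)$ for granted; your verification of both via elementary matrices, and your check that maximal abelian subgroups cover $\SL_2(R)$, simply fill in those details.
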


\begin{proof}
By \cref{cor:maximal-abelian-centralizers}, any two distinct maximal abelian subgroups of $\SL_2(R)$ intersect exactly in the center. The center is $\{\pm I\}$ which is finite, and thus has countably infinite index in $\SL_2(R)$. The claim follows from \cref{cor:countable-wedge-criterion}.
\end{proof}

\begin{theorem}\label[theorem]{thm:bcom-sl2-domain-amalgam}
Let $R$ be a countably infinite integral domain. Then the classifying space $B\com(\SL_2(R))$ is a $K(\Gamma_R,1)$ where the group $\Gamma_R$ is the amalgamated product of all maximal abelian subgroups of $\SL_2(R)$, amalgamated over the center, that is:
\[
    B\com(\SL_2(R))\simeq K(\Gamma_R,1) \quad \text{where} \quad
    \Gamma_R = \underset{A\in\calA_{\max}}{\bigast\nolimits_Z} A.
\]
\end{theorem}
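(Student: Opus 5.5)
We compute $B\com \SL_2(R)$ as a homotopy colimit over the poset of abelian subgroups. Write $G=\SL_2(R)$, viewed as discrete. The standard description of $B\com G$ for discrete $G$ (Adem--Cohen--Torres-Giese, and implicit in \cref{prop:coset-poset-model}) says that $B\com G$ is homotopy equivalent to the homotopy colimit of $BA$ over the poset of abelian subgroups of $G$. By cofinality, we may restrict to the subposet of intersections of maximal abelian subgroups. Concretely, the simplicial set $[n]\mapsto \Hom(\Z^n,G)$ is the union of the subcomplexes $BA$ for $A\in\calA_{\max}$, since every commuting tuple generates an abelian subgroup and hence lies in some maximal abelian subgroup by Zorn's lemma. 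Their intersections are $B(A\cap A')$. By \cref{cor:maximal-abelian-centralizers}, the relevant poset is $\calA_{\max}\cup\{Z\}$, with $Z$ the minimum and the members of $\calA_{\max}$ pairwise incomparable. Any intersection of two or more distinct maximal abelian subgroups is exactly $Z$, so
\[
B\com G \;=\; \bigcup_{A\in\calA_{\max}} BA,\qquad BA\cap BA' = BZ \text{ for } A\neq A'.
\]
Thus $B\com G$ is the union of the simplicial sets $BA$ glued along the common subcomplex $BZ$: a wedge-like pushout, that is, the colimit of the star-shaped diagram with centre $BZ$.

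The key point is that this colimit is already a homotopy colimit. Each inclusion $BZ\hookrightarrow BA$ is an injective map of simplicial sets, hence a cofibration, and a star-shaped diagram of cofibrations out of a common object is Reedy cofibrant. Equivalently, the realization is an iterated (possibly transfinite) pushout along cofibrations. So $B\com G\simeq \hocolim(BZ\to BA,\ A\in\calA_{\max})$, a graph of spaces over a star-shaped tree whose vertex and edge spaces are all aspherical.

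We then invoke the Whitehead/Serre result on graphs of groups: for a tree of $K(\pi,1)$'s with $\pi_1$-injective edge maps, the total space is a $K(\pi,1)$ for the fundamental group of the graph of groups. Here $Z\to A$ is injective. Since the graph is a tree (a star), this fundamental group is the amalgamated free product $\bigast_Z A=\Gamma_R$. Asphericity follows because the universal cover is a tree of contractible spaces over the Bass--Serre tree, and hence contractible. The fundamental group can also be computed directly by van Kampen, which gives $\Gamma_R$ from the colimit of groups.

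The main obstacle is the infinitely many vertices. We handle this by writing the star as a filtered union of finite substars. Each finite substar is aspherical with fundamental group a finite amalgam, by iterating the two-piece Whitehead theorem. Homotopy groups commute with filtered colimits of CW inclusions, so higher homotopy vanishes and $\pi_1$ is the directed colimit of the finite amalgams, namely $\Gamma_R$. Countability of $R$ is not really needed for this step; it matters only for the indexing, by \cref{prop:ecom-sl2-domain-criterion}. A final check is that taking realizations commutes with these unions, which holds because all the maps involved are subcomplex inclusions.
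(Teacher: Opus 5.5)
Your proposal is correct and follows the paper's route: identify $B\com G$ with a (homotopy) colimit of the $BA$ over the star-shaped poset $\calA_{\max}\cup\{Z\}$ supplied by \cref{cor:maximal-abelian-centralizers}, then apply the graph-of-groups asphericity theorem, which the paper cites as Hatcher's Theorem 1B.11. Your additional details fill in what the paper leaves to that citation, and they are sound: $B\com G$ is literally the union of the subcomplexes $BA$ meeting pairwise in $BZ$, so the strict colimit already models the homotopy colimit, and a filtered union of finite substars handles the infinitely many vertices.
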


\begin{proof}
In general, for a discrete group one has that \[B\com G \simeq \hocolim_{A \in \operatorname{Ab}(G)} BA,\] where the homotopy colimit is indexed by the poset of abelian subgroups of $G$. We can restrict the homotopy colimit to the subposet of those abelian subgroups which are intersections of maximal abelian subgroups, since that poset is cofinal in the poset of all abelian subgroups. In our case, this smaller poset is a directed tree: it only has the maximal abelian subgroups and the center, which is contained in each of them, and no further inclusions. A homotopy colimit of a diagram of classifying spaces of discrete groups whose shape is the free category on some directed graph and whose morphisms are induced from injective groups homomorphisms, is automatically also a $K(G,1)$ and $G$ is given by the colimit of the corresponding diagram of groups \cite[Theorem 1B.11]{Ha02}.
\end{proof}

 In \cref{sec:prelim-sl2-localized} we specialize these general observations to
\[
    \SL_2(\Z[1/p]),
\]
where the maximal abelian subgroups are precisely the centralizers of non-central elements. Thus the classification of elements into unipotent, split, real irreducible, and imaginary irreducible types leads directly to the list of maximal abelian subgroups. The resulting consequences for $E\com$ and $B\com$ are collected in \cref{sec:consequences-ecom-bcom}.

\section{The geometry of elements of $\SL_2(\Z[1/p])$}\label{sec:prelim-sl2-localized}

The purpose of this section is to explain the geometric information that will be used in the computation of centralizers. There are three ingredients. First, we fix the two ambient actions of $\SL_2(\Z[1/p])$: the Archimedean action on the hyperbolic plane and the non-Archimedean action on the Bruhat--Tits tree $T_p$. Second, we classify elements by the factorization of their characteristic polynomial over $\Q$. Third, we translate that algebraic classification into fixed points, axes, and min-sets in $\mathbb{H}^2\times T_p$. The central point is that an element of the centralizer of $g$ must preserve the geometric objects naturally associated to $g$, and the number of translation directions in those objects will determine the free abelian rank of the centralizer.

Recall that given a group element $g$ and point $x$ in a metric space acted on by the group, we call $d(x, g\cdot x)$ the \emph{displacement of $x$}, and we let the \emph{displacement of $g$} be the infimum of the displacements of all points $x$. The \emph{min-set} of $g$ is the set of all points achieving the infimum, that is, the set of points whose displacement is the displacement of $g$. The min-set of $g$ may very well be empty.

The definitions and basic facts concerning the Bruhat--Tits tree, its boundary, vertex stabilizers, and the elliptic/hyperbolic dichotomy for elements of $\SL_2(\Q_p)$ are standard; we follow the treatments in Serre's book on trees and in Bass's account of tree lattices \cite{SerreTrees}.

Let $p$ be a prime number. We denote by
\[
    \Z[1/p]=\left\{\frac{a}{p^k}\mid a\in \Z,\ k\geq 0\right\}
\]
the localization of $\Z$ obtained by inverting the powers of $p$. The group of interest is
\[
    \SL_2(\Z[1/p])=
    \left\{
        \begin{pmatrix}
            a & b \\
            c & d
        \end{pmatrix}
        \in M_2(\Z[1/p])\mid ad-bc=1
    \right\}.
\]
This group contains $\SL_2(\Z)$ as a subgroup and can be viewed simultaneously as a subgroup of $\SL_2(\R)$, via the inclusion $\Z[1/p]\subset \R$, and as a subgroup of $\SL_2(\Q_p)$, via the inclusion $\Z[1/p]\subset \Q_p$.

The inclusion $\SL_2(\Z[1/p])\leq \SL_2(\R)$ induces an action by isometries on the hyperbolic plane
\[
    \mathbb{H}^2=\{z\in \mathbb{C}\mid \operatorname{Im}(z)>0\}.
\]
Explicitly, a matrix $\begin{pmatrix}a&b\\ c&d\end{pmatrix}$ acts by M\"obius transformations,
\[
    z\longmapsto \frac{az+b}{cz+d}.
\]
Since the determinant is one, this action preserves orientation and is realized by hyperbolic isometries.

On the other hand, the inclusion $\SL_2(\Z[1/p])\leq \SL_2(\Q_p)$ allows us to consider its action on the Bruhat--Tits tree associated to $\SL_2(\Q_p)$, which we denote by $T_p$. We recall the construction of this tree in \cref{subsec:bruhat-tits-eigendirections}; for now, we only use that the natural action of $\SL_2(\Q_p)$ on $\Q_p^2$ induces a simplicial action on $T_p$, and hence an action of $\SL_2(\Z[1/p])$ by restriction.

Consequently, $\SL_2(\Z[1/p])$ acts diagonally on the product $\mathbb{H}^2\times T_p$. This action combines the Archimedean geometry of the hyperbolic plane with the non-Archimedean geometry of the Bruhat--Tits tree, and it will be the starting point for studying the structure of centralizers of elements in $\SL_2(\Z[1/p])$.

\subsection{Classification of elements of $\SL_2(\Z[1/p])$}\label{subsec:classification-elements}

Let $g\in \SL_2(\Z[1/p])$. Its characteristic polynomial is
\[
    \chi_g(x)=x^2-\tr(g)x+1,
\]
and its discriminant is
\[
    \Delta(g)=\tr(g)^2-4.
\]
The following classification is determined by the factorization of $\chi_g(x)$ over $\Q$ and, in the irreducible case, by the sign of $\Delta(g)$.

\begin{definition}\label[definition]{def:unipotent-element}
An element $g\in \SL_2(\Z[1/p])$ is called \emph{unipotent} if its characteristic polynomial has a repeated root equal to $1$ or $-1$. Equivalently,
\[
    \tr(g)=2 \quad \text{or} \quad \tr(g)=-2.
\]
Thus $g$ has a unique eigenvalue $\epsilon\in\{\pm 1\}$, counted with multiplicity. For example,
\[
    \begin{pmatrix}
        1 & 1 \\
        0 & 1
    \end{pmatrix}
    \in \SL_2(\Z[1/p])
\]
is unipotent, since its characteristic polynomial is $(x-1)^2$.
\end{definition}

\begin{definition}\label[definition]{def:split-element}
An element $g\in \SL_2(\Z[1/p])$ is called \emph{split} if its characteristic polynomial has two distinct roots in $\Q$. Equivalently, $\Delta(g)$ is a nonzero square in $\Q$. In this case $g$ is diagonalizable over $\Q$, and its two eigenvalues are rational and inverse to each other. For example,
\[
    \begin{pmatrix}
        p & 0 \\
        0 & p^{-1}
    \end{pmatrix}
    \in \SL_2(\Z[1/p])
\]
is split, since its eigenvalues are $p$ and $p^{-1}$.
\end{definition}

\begin{definition}\label[definition]{def:real-irreducible-element}
An element $g\in \SL_2(\Z[1/p])$ is called \emph{real irreducible} if its characteristic polynomial is irreducible over $\Q$ and has two real roots. Equivalently, $\Delta(g)>0$ and $\Delta(g)$ is not a square in $\Q$. For example,
\[
    \begin{pmatrix}
        0 & -1 \\
        1 & 3
    \end{pmatrix}
    \in \SL_2(\Z[1/p])
\]
is real irreducible, since its characteristic polynomial is $x^2-3x+1$, whose discriminant is $5$.
\end{definition}

\begin{definition}\label[definition]{def:imaginary-irreducible-element}
An element $g\in \SL_2(\Z[1/p])$ is called \emph{imaginary irreducible} if its characteristic polynomial is irreducible over $\Q$ and has no real roots. Equivalently, $\Delta(g)<0$. For example,
\[
    \begin{pmatrix}
        0 & -1 \\
        1 & 0
    \end{pmatrix}
    \in \SL_2(\Z[1/p])
\]
is imaginary irreducible, since its characteristic polynomial is $x^2+1$, whose discriminant is $-4$.
\end{definition}

Note that Definitions~\ref{def:unipotent-element}--\ref{def:imaginary-irreducible-element} exhaust all elements of $\SL_2(\Z[1/p])$. Indeed, $\chi_g(x)$ either has a repeated root, splits over $\Q$ with distinct roots, or is irreducible over $\Q$; in the irreducible case the sign of $\Delta(g)$ determines whether the roots are real or non-real complex conjugates.\\

When $g$ is irreducible over $\Q$ and $\lambda$ is an eigenvalue, we write $K=\Q(\lambda)$. Note that this field automatically contains the other eigenvalue as well, and is the splitting field of the characteristic polynomial of $g$.

The next issue is local. The behavior of $g$ on the Bruhat--Tits tree is governed by whether the prime $p$ splits in the quadratic field $K$. This local distinction is independent of the distinction between the real irreducible and imaginary irreducible cases, but it is exactly what decides whether the $p$-adic factor contributes an additional translation direction to the centralizer.

\subsection{The Bruhat--Tits tree and eigendirections}\label{subsec:bruhat-tits-eigendirections}

We recall in more detail the features of the action on the Bruhat--Tits tree $T_p$ that will be used in the computation of centralizers. A \emph{lattice} in $\Q_p^2$ is a free $\Z_p$-submodule of rank two which spans $\Q_p^2$ over $\Q_p$. Two lattices $L$ and $L'$ are homothetic if $L'=aL$ for some $a\in \Q_p^*$. The vertices of $T_p$ are homothety classes $[L]$ of lattices. Two vertices $[L]$ and $[L']$ are joined by an edge if one can choose representatives so that
\[
    pL\subsetneq L'\subsetneq L.
\]
Equivalently, $L/L'$ is a one-dimensional vector space over $\mathbb{F}_p$. With this convention every vertex has valence $p+1$, and $T_p$ is a tree.

The group $\GL_2(\Q_p)$ acts on lattices by its linear action on $\Q_p^2$, and scalar matrices act trivially on homothety classes. Hence the action factors through $\PGL_2(\Q_p)$; in particular $\SL_2(\Q_p)$, and therefore $\SL_2(\Z[1/p])$, acts simplicially on $T_p$. The stabilizer in $\SL_2(\Q_p)$ of the standard vertex $[\Z_p^2]$ is $\SL_2(\Z_p)$. More generally, stabilizers of vertices are compact open subgroups of $\SL_2(\Q_p)$.

The boundary $\partial T_p$ can be identified with the projective line $\mathbb{P}^1(\Q_p)$. To make this explicit, an end of the tree may be represented by an infinite geodesic ray
\[
    [L_0],[L_1],[L_2],\ldots
\]
starting at some vertex. After choosing representatives, one may arrange that the lattices are nested,
\[
    L_0\supsetneq L_1\supsetneq L_2\supsetneq \cdots,
\]
with each inclusion corresponding to one edge. This is what we mean by a decreasing ray of lattices. Such a ray progressively singles out one direction in $\Q_p^2$: the intersection
\[
    \bigcap_{n\geq 0} L_n
\]
is a rank-one $\Z_p$-module in the direction followed by the ray, and its $\Q_p$-span is a unique one-dimensional $\Q_p$-subspace. Equivalently, a ray converging to a boundary point determines a line $\ell\leq \Q_p^2$. It can be seen that this correspondence gives a bijection $\partial T_p \to \mathbb{P}^1(\Q_p)$. Thus the corresponding point of $\mathbb{P}^1(\Q_p)$ is the unique $\Q_p$-line selected by that ray. In this way, fixed points of an element of $\SL_2(\Q_p)$ on $\partial T_p$ are the same as invariant lines in $\Q_p^2$.

We will refer to these invariant lines as eigendirections. More precisely, if $g\in \SL_2(\Z[1/p])$ and $F$ is a field containing $\Q$, an \emph{$F$-eigendirection} of $g$ is a point $\ell\in \mathbb{P}^1(F)$ such that $g\ell=\ell$. Equivalently, $\ell$ is the line spanned by an eigenvector of $g$ in $F^2$. If $\lambda$ is the corresponding eigenvalue, then the eigendirection is the kernel of $g-\lambda I$ in $F^2$. Thus $g$ has two distinct $\Q_p$-eigendirections exactly when its characteristic polynomial splits over $\Q_p$ with two distinct roots.

The dynamical type of an element of $\SL_2(\Q_p)$ on $T_p$ is governed by these eigendirections. For the elements of $\SL_2(\Z[1/p])$ that appear in the classification below, this $p$-adic dynamics should be read together with the factorization of the characteristic polynomial over $\Q$.

First, a non-central unipotent element has a single eigendirection, defined already over $\Q$ and hence over $\Q_p$. Its unique eigenvalue is $\pm 1$, so its $p$-adic translation length is zero. Thus it fixes a nonempty subtree of $T_p$; geometrically, the unipotent direction that appears in its centralizer comes from the parabolic action on $\mathbb{H}^2$, not from a translation axis in $T_p$.

Second, if $g$ is split over $\Q$ with two distinct rational eigendirections, then these two eigendirections are also defined over $\Q_p$. Hence $g$ preserves the geodesic in $T_p$ with these two endpoints. After conjugating in $\GL_2(\Q_p)$, such an element has the form
\[
    \begin{pmatrix}
        \lambda & 0 \\
        0 & \lambda^{-1}
    \end{pmatrix},
\]
and its translation length on $T_p$ is
\[
    \ell_{T_p}(g)=2\left|v_p(\lambda)\right|.
\]
It is hyperbolic on $T_p$ exactly when $v_p(\lambda)\neq 0$; if $v_p(\lambda)=0$, it fixes a vertex but still preserves the geodesic determined by its two boundary fixed points. For elements in $\SL_2(\Z[1/p])$, this is the $p$-adic reflection of the localization direction.

Third, suppose that $g$ is irreducible over $\Q$ and let $K=\Q(\lambda)$ be the quadratic field generated by an eigenvalue. The distinction relevant to $T_p$ is whether $p$ splits in $K$. If $p$ splits in $K$, then the two embeddings $K\hookrightarrow \Q_p$ give two distinct $\Q_p$-eigendirections, so $g$ preserves the corresponding geodesic in $T_p$ and may contribute a $p$-adic translation direction. If $p$ does not split in $K$, then the characteristic polynomial remains irreducible over $\Q_p$; there are no $\Q_p$-eigendirections, no fixed point on $\partial T_p$, and $g$ is elliptic on $T_p$, fixing a nonempty subtree.

This description is compatible with the four algebraic types above. In the unipotent case, the $p$-adic action is elliptic and the infinite part of the centralizer comes from the unipotent direction. In the split-over-$\Q$ case, the two rational eigendirections give a preserved geodesic in both the real and $p$-adic settings, but the allowed translations are controlled by the single unit group $\Z[1/p]^\times$. In the real irreducible case, the real factor always supplies a hyperbolic axis, while the $p$-adic factor supplies an additional independent axis exactly when $p$ splits in $K$. In the imaginary irreducible case, the real factor is elliptic, so an infinite cyclic direction appears only in the split-at-$p$ case.

The following summary records the geometric object that the centralizer must preserve in each case. Here $A_\infty$ denotes the real hyperbolic axis when it exists, $A_p$ denotes the axis in $T_p$ when it exists, and $x_g\in\mathbb{H}^2$ denotes the fixed point of an elliptic element.
\[
\resizebox{\textwidth}{!}{$
\begin{array}{c|c|c|c}
\text{Type of }g & \mathbb{H}^2 & T_p & \text{Object preserved by }C_{\SL_2(\Z[1/p])}(g) \\
\hline
\text{unipotent} & \text{parabolic fixed point} & \text{elliptic} & \text{boundary point and horocycles} \\
\text{split over }\Q & A_\infty & \text{geodesic from eigendirections} & \text{real and }p\text{-adic eigendirections} \\
\text{real irreducible, }p\text{ does not split in }K & A_\infty & \operatorname{Fix}_{T_p}(g) & A_\infty\times \operatorname{Fix}_{T_p}(g) \\
\text{real irreducible, }p\text{ split in }K & A_\infty & A_p & A_\infty\times A_p \\
\text{imaginary irreducible, }p\text{ does not split in }K & x_g & \operatorname{Fix}_{T_p}(g) & \{x_g\}\times \operatorname{Fix}_{T_p}(g) \\
\text{imaginary irreducible, }p\text{ split in }K & x_g & A_p & \{x_g\}\times A_p
\end{array}
$}
\]

Thus the arithmetic condition ``$p$ splits in $K$'' has a precise geometric meaning: it is equivalent to the existence of two $\Q_p$-eigendirections, hence to the existence of a $p$-adic axis that may contribute a translation direction. When there are no $\Q_p$-eigendirections, the $p$-adic action is elliptic and contributes no such direction. In the product $\mathbb{H}^2\times T_p$, the centralizer preserves the product of the min-sets in the two factors; consequently the number of independent translation directions visible in these min-sets controls the free abelian rank appearing in the centralizer. This is the organizing principle for the computations in \cref{sec:centralizers}.

\section{Centralizers of elements in $\SL_2(\Z[1/p])$ }\label{sec:centralizers}

We now organize the centralizers of non-central elements of $\SL_2(\Z[1/p])$ according to the classification in \cref{subsec:classification-elements}. The central elements are $\pm I$, and their centralizer is the whole group $\SL_2(\Z[1/p])$. Thus, unless otherwise stated, we assume throughout this section that $g\neq \pm I$.

The centralizer of $g$ will be denoted by
\[
    C_{\SL_2(\Z[1/p])}(g)=\{h\in \SL_2(\Z[1/p])\mid hg=gh\}.
\]

The following table gives the expected algebraic type of the centralizer in each case. Here $K=\Q(\lambda)$ denotes the quadratic algebra generated by an eigenvalue of $g$, and saying that $p$ \emph{splits in $K$} means that the prime $p$ splits in the quadratic field $K$.

\[
\resizebox{\textwidth}{!}{$
\begin{array}{c|c|c|c|c|c}
\text{Type of }g & \text{In }\mathbb{H}^2 & \text{In }T_p & \text{Arithmetic condition} & \text{Example} & C_{\SL_2(\Z[1/p])}(g) \\
\hline
\text{Unipotent} & \text{parabolic} & \text{elliptic} & \text{always} &
\begin{pmatrix}1&1\\0&1\end{pmatrix} & C_2\times \Z[1/p] \\
\hline
\text{Split over } \mathbb Q & \text{hyperbolic} & \text{hyperbolic} & \text{always} &
\begin{pmatrix}p&0\\0&p^{-1}\end{pmatrix} & C_2\times \Z \\
\hline
\text{Real irreducible} & \text{hyperbolic} & \text{elliptic} & p\text{ does not split in }K &
\begin{pmatrix}0&-1\\1&3\end{pmatrix} & C_2\times \Z \\
\hline
\text{Real irreducible} & \text{hyperbolic} & \text{hyperbolic} & p\text{ splits in }K &
\begin{pmatrix}0&-1\\1&3\end{pmatrix},\ p=7 & C_2\times \Z^2 \\
\hline
\text{Imaginary irreducible} & \text{elliptic} & \text{elliptic} & p\not\equiv 1\pmod 4,
K=\Q(i) &
\begin{pmatrix}0&-1\\1&0\end{pmatrix} & C_4 \\
\hline
\text{Imaginary irreducible} & \text{elliptic} & \text{non-compact fixed direction} & p\equiv 1\pmod 4,
K=\Q(i) &
\begin{pmatrix}0&-1\\1&0\end{pmatrix} & C_4\times \Z \\
\hline
\text{Imaginary irreducible} & \text{elliptic} & \text{elliptic} & p=3\text{ or }p\equiv 2\pmod 3,
K=\Q(\sqrt{-3}) &
\begin{pmatrix}0&-1\\1&1\end{pmatrix} & C_6 \\
\hline
\text{Imaginary irreducible} & \text{elliptic} & \text{non-compact fixed direction} & p\equiv 1\pmod 3,
K=\Q(\sqrt{-3}) &
\begin{pmatrix}0&-1\\1&1\end{pmatrix} & C_6\times \Z
\end{array}
$}
\]

\subsection{The unipotent case}\label{subsec:unipotent-centralizers}

Let $g\in \SL_2(\Z[1/p])$ be non-central unipotent. Then
\[
    g=\epsilon I+N,
    \qquad \epsilon\in\{\pm 1\},
\]
where $N\neq 0$ and $N^2=0$. Since the scalar part commutes with every matrix, the centralizer of $g$ is the same as the centralizer of $N$.

We first describe the geometry. The nilpotent matrix $N$ has a one-dimensional kernel, and this line is the unique eigendirection of $g$. Over $\R$ this eigendirection determines the unique fixed point of $g$ on $\partial\mathbb{H}^2$. Thus $g$ acts parabolically on $\mathbb{H}^2$. Any element commuting with $g$ preserves this unique boundary point, and hence preserves the corresponding family of horocycles. This is the geometric source of the unipotent direction in the centralizer.

The same eigendirection is defined over $\Q_p$, but the repeated eigenvalue is $\epsilon\in\{\pm 1\}$ and has $p$-adic valuation zero. Consequently $g$ has zero translation length on $T_p$. Equivalently, after choosing a basis adapted to the eigendirection, $g$ lies in a conjugate of $\SL_2(\Z_p)$, and therefore fixes a vertex of $T_p$. Thus the $p$-adic factor contributes a fixed subtree rather than a translation axis. The min-set in the product has the form of a parabolic horocyclic direction in $\mathbb{H}^2$ together with a fixed subtree in $T_p$.

We now compute the centralizer. Since $N$ is a nonzero nilpotent $2\times 2$ matrix, it is conjugate over $\Q$ to a matrix of the form
\[
    E=\begin{pmatrix}0&u\\0&0\end{pmatrix},
\]
for some $u \in \Q^\times$.
This is only a rational normal form; the centralizer in $\SL_2(\Z[1/p])$ is obtained by intersecting the rational centralizer with the integral group. If
\[
    A=\begin{pmatrix}a&b\\c&d\end{pmatrix},
\]
then the equation $AE=EA$ gives $c=0$ and $a=d$. Hence
\[
    C_{\SL_2(\Q)}(E)=
    \left\{\pm\begin{pmatrix}1&t\\0&1\end{pmatrix}\mid t\in \Q\right\},
\]
and, equivalently,
\[
    C_{\SL_2(\Q)}(N)=\{\pm(I+tN)\mid t\in \Q\}.
\]
Intersecting with $\SL_2(\Z[1/p])$ gives
\[
    C_{\SL_2(\Z[1/p])}(g)=\{\pm(I+tN)\mid tN\in M_2(\Z[1/p])\}.
\]
Let
\[
    L_N=\{t\in\Q\mid tN\in M_2(\Z[1/p])\}.
\]
This is a nonzero fractional ideal of the principal ideal domain $\Z[1/p]$: explicitly it is the finite intersection of the modules $n_{ij}^{-1}\Z[1/p]$, where $n_{ij}$ runs over the nonzero entries of $N$. Therefore $L_N=r\Z[1/p]$ for some $r\in\Q^*$, and as an additive group it is isomorphic to $\Z[1/p]$. Hence we have a group isomorphism:
\[
    C_{\SL_2(\Z[1/p])}(g)\cong C_2\times \Z[1/p].
\]

\subsection{The split case}\label{subsec:split-centralizers}

Let $g\in \SL_2(\Z[1/p])$ be split. Then the characteristic polynomial of $g$ has two distinct roots in $\Q$, say $\lambda$ and $\lambda^{-1}$, with $\lambda\in\Q^*$ and $\lambda\neq \pm1$. Thus $g$ has two rational eigendirections.

In the hyperbolic plane these two eigendirections are two points of $\partial\mathbb{H}^2$. Hence $g$ is hyperbolic on $\mathbb{H}^2$ and acts by translation along the geodesic $A_\infty$ joining them. Every element of the centralizer preserves the two endpoints of $A_\infty$; since it commutes with $g$, it preserves each endpoint individually and acts on $A_\infty$ by translations.

The same two eigendirections are defined over $\Q_p$. Therefore $g$ preserves the corresponding geodesic $A_p\subset T_p$. In the diagonal model
\[
    \begin{pmatrix}
        \lambda & 0 \\
        0 & \lambda^{-1}
    \end{pmatrix},
\]
the translation length on $T_p$ is $2|v_p(\lambda)|$. For elements of $\SL_2(\Z[1/p])$ which are split and non-central, the possible denominators are powers of $p$, so the $p$-adic component detects precisely the localization direction. In the prototypical case $\diag(p,p^{-1})$, the axis $A_p$ is translated nontrivially.

Thus the centralizer preserves the product of the two axes
\[
    A_\infty\times A_p\subset \mathbb{H}^2\times T_p.
\]
However, in the rationally split case these two translation directions are controlled by the same unit group of $\Z[1/p]$, rather than by two independent unit groups. This is why the centralizer has one infinite cyclic factor, not two.

We now make this algebraic. Choose $P\in \GL_2(\Q)$ such that
\[
    PgP^{-1}=\begin{pmatrix}\lambda&0\\0&\lambda^{-1}\end{pmatrix}.
\]
A matrix commuting with this diagonal matrix must preserve the two eigenspaces. Hence
\[
    C_{\SL_2(\Q)}(g)=
    P^{-1}\left\{\begin{pmatrix}a&0\\0&a^{-1}\end{pmatrix}\mid a\in \Q^*\right\}P.
\]
Thus the rational centralizer is the split norm-one torus.

Let $R=\Z[1/p]$. The centralizer inside $\SL_2(R)$ is the intersection of this rational torus with $\SL_2(R)$. Since
\[
    R[g]\subseteq \Q[g]\cong \Q\times\Q
\]
is an $R$-order in the split algebra, we have
\[
    C_{\SL_2(R)}(g)=\{u\in \Q[g]\cap M_2(R)\mid N(u)=1\}.
\]
This is the norm-one unit group of a split $R$-order, and it is commensurable with
\[
    \{(a,a^{-1})\mid a\in R^*\}.
\]
Since
\[
    R^*=\Z[1/p]^*=\{\pm p^n\mid n\in\Z\}\cong C_2\times \Z,
\]
and the central element $-I$ belongs to the centralizer, we obtain
\[
    C_{\SL_2(\Z[1/p])}(g)\cong C_2\times \Z.
\]
For example, if
\[
    s=\begin{pmatrix}p&0\\0&p^{-1}\end{pmatrix},
\]
then
\[
    C_{\SL_2(\Z[1/p])}(s)=
    \left\{\begin{pmatrix}\pm p^n&0\\0&\pm p^{-n}\end{pmatrix}\mid n\in\Z\right\}
    \cong C_2\times \Z.
\]

\subsection{The real irreducible case}\label{subsec:real-irreducible-centralizers}

Let $g\in \SL_2(\Z[1/p])$ be real irreducible. Then $\chi_g(x)$ is irreducible over $\Q$ and has two real roots. If $\lambda$ is one of these roots, then
\[
    K=\Q(\lambda)
\]
is a real quadratic field. Under the standard identification
\[
    \partial\mathbb{H}^2\cong \mathbb{P}^1(\R),
\]
the fixed points of the M\"obius action of $g$ on $\partial\mathbb{H}^2$ are precisely the real eigendirections of $g$. Indeed, a line $[v]\in \mathbb{P}^1(\R)$ is fixed by the projective action of $g$ if and only if $gv$ is a scalar multiple of $v$, that is, if and only if $[v]$ is an eigendirection. Since $g$ has two distinct real eigenvalues, it has two fixed points on $\partial\mathbb{H}^2$ and acts hyperbolically on $\mathbb{H}^2$. We denote by $A_\infty$ the geodesic joining these two fixed points.

Every element of $C_{\SL_2(\Z[1/p])}(g)$ preserves the set of fixed points of $g$ on $\partial\mathbb{H}^2$: if $h$ commutes with $g$ and $x$ is fixed by $g$, then
\[
    g(hx)=h(gx)=hx.
\]
Moreover, the two endpoints of $A_\infty$ cannot be interchanged by an element commuting with $g$. They are the attracting and repelling fixed points of the hyperbolic isometry $g$; interchanging them would conjugate $g$ to $g^{-1}$, whereas an element of the centralizer conjugates $g$ to itself. Hence every element of the centralizer preserves each endpoint of $A_\infty$ individually and acts on $A_\infty$ by translations.

We now turn to the $p$-adic factor. The relevant distinction is whether $p$ splits in the real quadratic field $K$. If $p$ does not split in $K$, then
\[
    K\otimes_\Q \Q_p
\]
is a field. Equivalently, the characteristic polynomial of $g$ remains irreducible over $\Q_p$, so the two eigendirections of $g$ are not points of $\mathbb{P}^1(\Q_p)=\partial T_p$. Thus $g$ has no fixed point on $\partial T_p$. Since a hyperbolic automorphism of a tree has exactly two fixed points on the boundary, $g$ cannot be hyperbolic on $T_p$; therefore it is elliptic and fixes a nonempty subtree $\operatorname{Fix}_{T_p}(g)$. This fixed point set is a subtree: if two vertices are fixed, then the unique geodesic segment joining them is fixed pointwise.

In this nonsplit case the min-set in the product has the form
\[
    \Min(g)=A_\infty\times \operatorname{Fix}_{T_p}(g).
\]
The centralizer preserves this min-set. Its action on the first factor gives a homomorphism
\[
    \tau\colon C_{\SL_2(\Z[1/p])}(g)\longrightarrow \operatorname{Trans}(A_\infty)\cong \R.
\]
The kernel fixes $A_\infty$ pointwise and also fixes a point of $\operatorname{Fix}_{T_p}(g)$, hence fixes a point of $\mathbb{H}^2\times T_p$; by properness of the action, this kernel is finite. Properness also implies that the image of $\tau$ is discrete. Since $g$ itself translates nontrivially along $A_\infty$, the image is infinite cyclic. Thus the centralizer is virtually cyclic; because the centralizer is abelian and contains the central element $-I$, its type is
\[
    C_{\SL_2(\Z[1/p])}(g)\cong C_2\times \Z.
\]

If $p$ splits in $K$, then
\[
    K\otimes_\Q \Q_p\cong \Q_p\times \Q_p,
\]
and the two embeddings $K\hookrightarrow \Q_p$ determine two distinct $\Q_p$-eigendirections. Hence $g$ preserves the geodesic $A_p\subset T_p$ with these two endpoints. The element $g$ acts hyperbolically on $T_p$ exactly when the corresponding eigenvalue has nonzero $p$-adic valuation; if that valuation is zero, $g$ fixes vertices of $T_p$ while still preserving the geodesic $A_p$. Thus the correct conclusion from the splitting of $p$ is the existence of the invariant $p$-adic geodesic, not automatically positive translation length for the particular element $g$.

The algebraic centralizer makes the rank clear. Since $g$ is non-scalar and irreducible over $\Q$, we have
\[
    C_{\SL_2(\Q)}(g)=\{u\in K^*\mid N_{K/\Q}(u)=1\},
\]
under the embedding $K=\Q[g]\subset M_2(\Q)$. Intersecting with $\SL_2(\Z[1/p])$ gives the norm-one group of an order in the localized ring $\calO_K[1/p]$. By the $S$-unit theorem \cite[Chapter I, Section 7]{NeukirchANT}, this norm-one group has rank one when $p$ does not split in $K$ and rank two when $p$ splits in $K$. In the split case the second generator may be understood geometrically as a translation in the $p$-adic direction associated to the two primes of $K$ above $p$. Therefore
\[
    C_{\SL_2(\Z[1/p])}(g)\cong C_2\times \Z^2
\]
when $p$ splits in $K$.

As an example, consider
\[
    r=\begin{pmatrix}0&-1\\1&3\end{pmatrix}.
\]
Its characteristic polynomial is $x^2-3x+1$, so $K=\Q(\sqrt{5})$. If $p$ does not split in $\Q(\sqrt{5})$, then $C_{\SL_2(\Z[1/p])}(r)$ has type $C_2\times \Z$. If $p$ splits in $\Q(\sqrt{5})$, for instance $p=7$, then the centralizer has type $C_2\times \Z^2$.

\subsection{The imaginary irreducible case}\label{subsec:imaginary-irreducible-centralizers}

Let $g\in \SL_2(\Z[1/p])$ be imaginary irreducible. Then $\chi_g(x)$ is irreducible over $\Q$ and has no real roots. If $\lambda$ is an eigenvalue, then
\[
    K=\Q(\lambda)
\]
is an imaginary quadratic field. In the hyperbolic plane the element $g$ is elliptic. Its min-set in the real factor is the single fixed point $x_g\in \mathbb{H}^2$, rather than a geodesic line. Consequently the real factor contributes no translation direction to the centralizer. This is the main difference from the real irreducible case: any infinite-order translation direction must come from the action on the Bruhat--Tits tree.

Suppose first that $p$ does not split in $K$. Then $K\otimes_\Q\Q_p$ is a field, so the eigenlines of $g$ are not defined over $\Q_p$. Hence $g$ has no fixed point on
\[
    \partial T_p=\mathbb{P}^1(\Q_p).
\]
A hyperbolic automorphism of a tree always has two fixed points on the boundary, so $g$ cannot be hyperbolic on $T_p$. Therefore $g$ is elliptic and fixes a nonempty subtree of $T_p$. The min-set of $g$ in $\mathbb{H}^2\times T_p$ has no line factor: it is contained in
\[
    \{x_g\}\times \operatorname{Fix}_{T_p}(g).
\]
Thus there is no source for a free abelian translation direction. Algebraically, the centralizer is the group of norm-one units in an order of the imaginary quadratic field $K$ localized at a nonsplit prime; this has rank zero. Hence the centralizer is finite.

If $p$ splits in $K$, then the two eigenlines of $g$ are defined over $\Q_p$. Thus $g$ preserves the geodesic $A_p\subset T_p$ with these two endpoints. As in the real irreducible case, splitting gives the invariant $p$-adic geodesic; the actual translation length of the particular element $g$ is governed by the $p$-adic valuation of an eigenvalue. The localized norm-one unit group has rank one by the $S$-unit theorem \cite[Chapter I, Section 7]{NeukirchANT}, because the two primes of $K$ above $p$ provide one independent $S$-unit direction. Geometrically, this is the translation direction along $A_p$. Therefore the centralizer is its finite elliptic part times one copy of $\Z$.

The finite elliptic part is usually $\{\pm I\}\cong C_2$. The two exceptional imaginary quadratic fields are
\[
    \Q(i), \qquad \Q(\sqrt{-3}),
\]
whose unit groups contain roots of unity of orders $4$ and $6$, respectively. These are the exceptional cases displayed in the table.

For $K=\Q(i)$, take
\[
    a=\begin{pmatrix}0&-1\\1&0\end{pmatrix}.
\]
Then $a^2=-I$ and $a$ has order four. If $p\not\equiv 1\pmod 4$, then $p$ does not split in $\Q(i)$, and
\[
    C_{\SL_2(\Z[1/p])}(a)\cong C_4.
\]
If $p\equiv 1\pmod 4$, then $p$ splits in $\Q(i)$, and the localization contributes one infinite cyclic direction:
\[
    C_{\SL_2(\Z[1/p])}(a)\cong C_4\times \Z.
\]

For $K=\Q(\sqrt{-3})$, take
\[
    b=\begin{pmatrix}0&-1\\1&1\end{pmatrix}.
\]
The characteristic polynomial of $b$ is $x^2-x+1$, and $b$ has order six. If $p=3$ or $p\equiv 2\pmod 3$, then $p$ does not split in $\Q(\sqrt{-3})$, and
\[
    C_{\SL_2(\Z[1/p])}(b)\cong C_6.
\]
If $p\equiv 1\pmod 3$, then $p$ splits in $\Q(\sqrt{-3})$, and
\[
    C_{\SL_2(\Z[1/p])}(b)\cong C_6\times \Z.
\]

More generally, for an imaginary quadratic field $K$ different from $\Q(i)$ and $\Q(\sqrt{-3})$, the same argument gives a finite part $C_2$ instead of $C_4$ or $C_6$, with an additional $\Z$ factor exactly when $p$ splits in $K$.

\section{The spaces $E\com$ and $B\com$ for the discrete group $\SL_2(\Z[1/p])^\delta$}\label{sec:consequences-ecom-bcom}

In this section we apply the preceding algebraic computations to the discrete group
\[
    G=\SL_2(\Z[1/p])^{\delta}.
\]
From now on, $\delta$ will mean that the corresponding spaces is endowed with the discrete topology. Since $\Z[1/p]$ is a countably infinite integral domain, theorems \ref{prop:ecom-sl2-domain-criterion} and \ref{thm:bcom-sl2-domain-amalgam} directly give us a description of $E\com \SL_2(\Z[1/p])^\delta$ and $B\com \SL_2(\Z[1/p])^\delta$. The description of $E\com \SL_2(\Z[1/p])^\delta$ is as explicit as one could want, so we simply restate it here. However, the general description of $E\com \SL_2(\Z[1/p])^\delta$ obtained in Section \ref{sec:ecom-sl2-domain} gives it in terms of the centralizers of non-central elements of $\SL_2(\Z[1/p])$, which we have computed in Section \ref{sec:centralizers}; so out statement here can be more explicit.

\begin{theorem}\label[theorem]{thm:ecom-sl2-localized-wedge}
For every prime $p$,
\[
    E\com\bigl(\SL_2(\Z[1/p])^{\delta}\bigr)
    \simeq
    \bigvee_{\mathbb{N}} S^1.
\]
\end{theorem}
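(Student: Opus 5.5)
The plan is to deduce the statement directly from \cref{prop:ecom-sl2-domain-criterion}, applied with $R=\Z[1/p]$. That theorem has only two hypotheses: $R$ must be an integral domain, and it must be countably infinite.

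For the first hypothesis, $\Z[1/p]$ is a subring of $\Q$. It is therefore commutative with identity and has no zero divisors, so it is an integral domain with field of fractions $\Q$.

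For the second hypothesis, we need $\Z[1/p]$ to be countably infinite. It contains $\Z$, so it is infinite. It is also the image of the countable set $\Z\times\mathbb{N}$ under $(a,k)\mapsto a/p^k$, so it is countable.

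With both hypotheses in place, \cref{prop:ecom-sl2-domain-criterion} yields $E\com(\SL_2(\Z[1/p])^\delta)\simeq\bigvee_{\mathbb{N}}S^1$, where we note that the group is regarded as discrete there, as in this statement.

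For completeness I would also check the inputs that the earlier theorem feeds into \cref{cor:countable-wedge-criterion}. The first input is that distinct maximal abelian subgroups intersect exactly in the center; this is \cref{cor:maximal-abelian-centralizers}. The second input is that the center has countably infinite index. For this, the center of $\SL_2(\Z[1/p])$ is $\{\pm I\}$, since a matrix commuting with both elementary unipotents is scalar, and a scalar $\lambda I$ of determinant $1$ has $\lambda^2=1$. The group itself is countable because it sits inside $\Z[1/p]^4$. It is infinite because it contains the matrices $\begin{pmatrix}1&n\\0&1\end{pmatrix}$ for all $n\in\Z$. So the finite center has countably infinite index.

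There is no real obstacle here. The only point needing care is confirming that the hypotheses of the general theorem hold, in particular the countability and infinitude of the ring. The explicit centralizer computations of \cref{sec:centralizers} are not needed for this statement; they matter only for the description of $B\com$.
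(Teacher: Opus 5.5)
Your proposal is correct and is essentially the paper's proof, which simply specializes \cref{prop:ecom-sl2-domain-criterion} to the countably infinite integral domain $R=\Z[1/p]$. The paper's text has a slip there, writing $R=\SL_2(\Z[1/p])$, which you avoid; your extra checks that the center is $\{\pm I\}$ with countably infinite index only re-verify what that theorem's proof already establishes via \cref{cor:countable-wedge-criterion}.
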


\begin{proof}
This is simply Theorem \ref{prop:ecom-sl2-domain-criterion} in the special case of $R = \SL_2(\Z[1/p])$. (In Section \ref{sec:ecom-sl2-domain} we gave $\SL_2(R)$ the discrete topology.)
\end{proof}

\begin{theorem}\label[theorem]{thm:bcom-sl2-localized-amalgam}
For every prime $p$,
\[
    B\com SL_2(\Z[1/p])^{\delta}\simeq K(\Gamma_p,1),
\]
where $\Gamma_p$ is the amalgamated product of countably many copies of each of the following groups, amalgated over the obvious $C_2$ subgroup coming form the first factors: $C_2\times \Z[1/p]$, $C_2\times \Z$, $C_2\times \Z^2$, $C_2$, $C_4$, $C_6$, $C_4 \times \Z$, $C_6 \times \Z$. 
\end{theorem}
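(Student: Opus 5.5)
The plan is to specialize \cref{thm:bcom-sl2-domain-amalgam} to $R=\Z[1/p]$, which is a countably infinite integral domain. That theorem already gives $B\com \SL_2(\Z[1/p])^\delta\simeq K(\Gamma,1)$, with $\Gamma$ the amalgamated product of all maximal abelian subgroups amalgamated over $Z=\{\pm I\}\cong C_2$. What remains is to list the isomorphism types of the factors, identify the amalgamating subgroup inside each factor, and count how many factors of each type occur.

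\textbf{Step 1 (which factors occur).} By \cref{cor:maximal-abelian-centralizers}, the maximal abelian subgroups are exactly the centralizers $C_{\SL_2(\Z[1/p])}(g)$ of non-central elements $g$. Every non-central element is unipotent, split, real irreducible or imaginary irreducible. For each type I read off the isomorphism type of the centralizer from \cref{sec:centralizers}:
\begin{itemize}
\item unipotent: $C_2\times\Z[1/p]$;
\item split, or real irreducible with $p$ not split in $K$: $C_2\times\Z$;
\item real irreducible with $p$ split in $K$: $C_2\times\Z^2$;
\item imaginary irreducible: $C_2$, $C_4$ or $C_6$, times $\Z$ exactly when $p$ splits in $K$.
\end{itemize}
In the imaginary case the finite part is the group of roots of unity lying in the relevant order $\Z[1/p][g]\cap M_2(\Z[1/p])$. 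So $C_4$ or $C_6$ appears only when that order contains $i$ or a primitive sixth root of unity, and otherwise the finite part is $C_2$. In every case $-I$ is the generator of the first factor $C_2$. Hence the amalgamation over $Z$ is exactly the ``obvious $C_2$ subgroup coming from the first factors''; for $C_4$ and $C_6$ it is the unique subgroup of order $2$.

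\textbf{Step 2 (counting).} The group is countable, so there are at most countably many maximal abelian subgroups. To get countably infinitely many of each occurring type, I would conjugate. If $A=C(g)$, then $hAh^{-1}=C(hgh^{-1})$, and this is a distinct subgroup whenever $h\notin N(A)$. The normalizer $N(A)$ permutes the fixed points of $g$ on $\partial\mathbb{H}^2$, or fixes $x_g$ in the elliptic case. So $N(A)$ contains $A$ with index at most $2$ and has infinite index, since $\SL_2(\Z[1/p])$ is not virtually abelian. Therefore each isomorphism type that occurs occurs countably infinitely often. For the types tied to a field $K$ (real irreducible with $p$ split or not, imaginary with finite part $C_2$) I also expect infinitely many choices of $K$: pick traces $t\in\Z$ so that $t^2-4$ has the required sign and splitting behaviour at $p$, using quadratic reciprocity or Dirichlet's theorem.

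\textbf{Main obstacle.} The main obstacle is the exact bookkeeping in Step 2 for the exceptional types. Which of $C_4$, $C_4\times\Z$, $C_6$, $C_6\times\Z$ actually occur depends on $p$ modulo $4$ and modulo $3$. Depending on $p$, $C_2\times\Z$ may also arise only from some families. So the literal list in the statement must be read as ``the types among these that occur for the given $p$, each with countable multiplicity''. Concretely, $C_4$ occurs only when $p\not\equiv1\pmod 4$ and $C_4\times\Z$ only when $p\equiv1\pmod4$, and similarly for $C_6$. I would make this precise by citing the table of \cref{sec:centralizers}, and I would check that every order containing $i$ (respectively a primitive sixth root of unity) yields the stated type. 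Since amalgamated products over $C_2$ are insensitive to the order of the factors, the colimit description from \cref{thm:bcom-sl2-domain-amalgam} then gives $\Gamma_p$ as stated.
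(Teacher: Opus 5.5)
Your route is the paper's: specialize \cref{thm:bcom-sl2-domain-amalgam} to $R=\Z[1/p]$ and read off the factors from \cref{sec:centralizers}. The paper's proof does only this and simply asserts the countable multiplicities, so your Step~2 is an addition. Your caveat about the list is correct and in fact necessary. For a given $p$, exactly one of $C_4$, $C_4\times\Z$ and exactly one of $C_6$, $C_6\times\Z$ occurs, and the literal list gives a non-isomorphic group. To see this, note that $Z$ is the whole center of such an amalgam and $\Gamma_p/Z$ is the free product of the $A/Z$. For $p\equiv 1\pmod 4$, a spurious factor $C_4$ would contribute an involution of $\Gamma_p/Z$ with finite centralizer. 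In the true quotient, every involution is conjugate into a factor $(C_4\times\Z)/Z\cong C_2\times\Z$ and so has centralizer $C_2\times\Z$. Your refinement, that the finite part is the group of roots of unity in the order $\Q[g]\cap M_2(\Z[1/p])$, is also more accurate than the paper's table: non-maximal orders in $\Q(i)$ give $C_2\times\Z$ even when $p\equiv 1\pmod 4$.

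The step that fails as written is the bound $[N(A):A]\le 2$ in Step~2, which is false for unipotent centralizers. For $A=C\bigl(\begin{smallmatrix}1&1\\0&1\end{smallmatrix}\bigr)$ the normalizer is the whole upper triangular subgroup $\bigl\{\bigl(\begin{smallmatrix}\epsilon p^n&b\\0&\epsilon p^{-n}\end{smallmatrix}\bigr)\bigr\}$ with $\epsilon=\pm1$. So $N(A)/A\cong\Z$, $N(A)$ is not virtually abelian, and your deduction of infinite index does not apply to this type. The conclusion still holds by a different argument. Distinct conjugates of $A$ are detected by their unique fixed point in $\partial\mathbb{H}^2$, and $\SL_2(\Z)$ moves that point through all of $\mathbb{P}^1(\Q)$. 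Alternatively, $N(A)$ is solvable while $\SL_2(\Z[1/p])$ contains free subgroups.

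There are two smaller corrections. First, the type $C_2$ never occurs, because the center is never maximal abelian. Concretely, an imaginary irreducible $g$ with $p$ non-split in $K$ has roots of unity as eigenvalues, so $C(g)\supseteq\langle g,-I\rangle$ is $C_4$ or $C_6$. This is harmless, since $C_2\ast_{C_2}H=H$. Second, integer traces cannot produce imaginary fields other than $\Q(i)$ and $\Q(\sqrt{-3})$, because $t\in\Z$ with $t^2<4$ forces $t\in\{0,\pm1\}$. For those types you would need traces in $\Z[1/p]$, although your conjugation argument already makes that aside unnecessary.
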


\begin{proof}
This is the specialization of \cref{thm:bcom-sl2-domain-amalgam} to $R=\Z[1/p]$, combined with the description of the centralizers of non-central elements of $\SL_2(\Z[1/p])$ from Section \ref{sec:centralizers}; namely, the centralizers are the unipotent centralizers $C_2\times \Z[1/p]$, the split centralizers $C_2\times \Z$, the real irreducible centralizers $C_2\times \Z$ or $C_2\times \Z^2$ according to the splitting of $p$ in the associated real quadratic field, and the imaginary irreducible centralizers with finite part $C_2$, $C_4$, or $C_6$, possibly multiplied by one copy of $\Z$ when $p$ splits in the associated imaginary quadratic field. Each of these isomorphism types appears countably many times in the amalgamated product $\Gamma_p$.
\end{proof}

Observe that the homotopy type of $E\com \SL_2(\Z[1/p])^\delta$ does not depend on $\Z[1/p]$ beyond it being a countably infinite integral domain, while the description of $B\com \SL_2(Z[1/p])^\delta$ records the actual isomorphism types of the centralizers of non-central elements through the above amalgam.

\section{The topological spaces $E\com$ and $B\com$ for $\SL_2(\Z[1/p])$}\label{sec:topologies}

In this section we record the comparison suggested by the preceding discrete computations and by the topology induced from the inclusion $\Z[1/p]\subset \R$, where $\R$ has its usual topology. We want to compare
$B\com\SL_2(\Z[1/p])$ with $B\com\SL_2(\Z[1/p])^\delta$, and similarly
$E\com\SL_2(\Z[1/p])$ with $E\com\SL_2(\Z[1/p])^\delta$.

It is probably fair to say that when the initial definitions of $B\com G$ and $E\com G$ were given in \cite{AdemCohenTorresGiese}, the authors really had in mind topological groups with well-behaved topologies, mainly Lie groups and discrete groups (which, when countable, are also Lie groups). The results in that paper and the subsequent research on these spaces have certainly focused on those cases. For groups with more complicated topologies, such as $\SL_2(\Z[1/p])$, one might want to revisit a small detail of the definition: the choice of the geometric realization. Recall that both spaces are defined as geometric realizations of simplicial spaces. There is an alternative way to construct a topological space from a simplicial space, called the \emph{fat realization}, which quotients out only by the action of the face maps (instead of using both face and degeneracy maps). Generally speaking, the fat realization is better behaved from the homotopical point of view, but at the same time is not too different from the geometric realization, since for nice enough simplicial spaces it is homotopically equivalent to it. Next, we shall recall two standard facts that make this precise.

First, if $f\colon X_\bullet\to Y_\bullet$ is a levelwise weak equivalence of semisimplicial spaces, then the induced map on fat realizations
\[
    \|X_\bullet\|\longrightarrow \|Y_\bullet\|
\]
is a weak equivalence; this is one of the basic homotopy invariance properties of fat realization, and is stated in this form by Ebert and Randal-Williams \cite[Theorem 2.2]{EbertRandalWilliamsSemiSimplicialSpaces}. This result is \emph{false} for the geometric realization as shown by this nice, simple example given by Tyler Lawson on MathOverflow \cite{Lawson}:

\begin{example}
    Let $X$ be the simplicial space with $X_0 = \{0\}$, $X_1 = \{0\} \cup \{1/n : n > 0\}$ and no higher non-degenerate simplices (which means that $X_n = X_1 \vee_{X_0} X_1 \vee_{X_0} \cdots \vee_{X_0} X_1$). Then the geometric realization of $X$ is homeomorphic to the Hawaiian earring. Now consider the levelwise discretization of $X$, say $X^\delta$. The identity map $X^\delta \to X$ is a levelwise weak equivalence, but $|X^\delta|$ is a countable wedge of circles, which is not weakly equivalent to the Hawaiian earring, since they famously do no have isomorphic fundamental groups.
\end{example}

Second, for a simplicial space $X_\bullet$, there is a natural comparison map from the fat realization to the ordinary geometric realization, given by further quotienting out by the action of the degeneracies:
\[
    \|X_\bullet\|\longrightarrow |X_\bullet|.
\]
This map is a homotopy equivalence when $X_\bullet$ is \emph{good}, meaning that each degeneracy map $s_i\colon X_n\to X_{n+1}$ is a cofibration. This comparison theorem is due to Segal \cite[Theorem A.1(iv)]{Segal}. When $G$ is either discrete or a Lie group, the simplicial spaces used to define $E\com G$ and $B\com G$ are good, so in that context both types of realization give homotopy equivalent answers. For our group $\SL_2(\Z[1/p])$ there is a marked difference in behavior. Because fat realization preserves weak equivalences, we believe this is a better choice to define $E\com G$ and $B\com G$ for general topological groups and those notations from now on refer to the fat realization version of the definitions. When we want to use the geometric realization we will use the notations $E^{geo}\com G$ and $B^{geo}\com G$.

\begin{observation}\label[observation]{obs:z-localized-totally-disconnected}
The space \(\Z[1/p]\), with the topology induced from \(\R\), is totally disconnected.
\end{observation}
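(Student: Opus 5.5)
The plan is to show directly that every connected subset of $\Z[1/p]$ with at least two points leads to a contradiction, using the fact that the complement of $\Z[1/p]$ in $\R$ is dense. Since $\Z[1/p]$ is countable and $\R$ is uncountable, $\R\setminus\Z[1/p]$ is nonempty. More concretely, for any real numbers $a<b$ there is an irrational number $c$ with $a<c<b$, for instance $c=a+(b-a)/\sqrt{2}$ when $a$ is rational. Every element of $\Z[1/p]$ is rational, so such a $c$ does not lie in $\Z[1/p]$.

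Now let $S\subseteq \Z[1/p]$ contain two points $a<b$, and pick an irrational $c$ strictly between them as above. The sets
\[
    U=S\cap(-\infty,c), \qquad V=S\cap(c,\infty)
\]
are open in the subspace topology inherited from $\R$. They are disjoint, and both are nonempty, since $a\in U$ and $b\in V$. Because $c\notin S$, their union is all of $S$. So $U$ and $V$ separate $S$, and $S$ is disconnected. It follows that the connected components of $\Z[1/p]$ are singletons, which is total disconnectedness.

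The same argument works for any subset of $\Q$, and there is no real obstacle. The only point to state explicitly is that the topology in question is the subspace topology from $\R$, not the $p$-adic one, so that the half-lines above are open in the right sense.

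One may also remark, anticipating later use, that this extends to $\SL_2(\Z[1/p])\subset \Z[1/p]^4$. A connected subset of $\SL_2(\Z[1/p])$ projects under each coordinate map to a connected subset of $\Z[1/p]$, hence to a single point, so the subset itself is a point. Therefore $\SL_2(\Z[1/p])$, with the topology from $\SL_2(\R)$, is totally disconnected as well.
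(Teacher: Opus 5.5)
Your proof is correct and is the standard argument; the paper records this as an observation without any proof, so there is nothing to compare, and your closing remark about coordinate projections supplies the step the paper uses silently when it treats $\SL_2(\Z[1/p])$ (and the simplex spaces, which sit inside finite powers of $G$) as totally disconnected. One small slip: $c=a+(b-a)/\sqrt{2}$ is guaranteed irrational only when $b-a$ is rational, not merely when $a$ is (for example $a=0$, $b=\sqrt{2}$ gives $c=1$). This does not affect your proof, since both of your endpoints lie in $\Z[1/p]\subset\Q$.
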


We will use the following obvious topological lemma:

\begin{lemma}\label[lemma]{lem:totally-disconnected-weak-equivalence}
Let \(X\) be a totally disconnected topological space. Then the identity homomorphism
\[
    X^\delta\longrightarrow X
\]
is a weak equivalence.
\end{lemma}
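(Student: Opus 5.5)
The plan is to show directly that the identity map $\iota\colon X^\delta\to X$ induces a bijection on $\pi_0$ and that all higher homotopy groups of $X$ vanish at every basepoint. Then $\iota$ satisfies the definition of a weak equivalence, because $X^\delta$ is discrete and so has trivial $\pi_n$ for $n\ge 1$ at every basepoint. Everything comes down to one claim: every continuous map $f\colon K\to X$ from a path-connected space $K$ is constant.

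To prove the claim, take $f\colon K\to X$ continuous with $K$ path-connected. The image $f(K)$ is path-connected, hence connected, as a continuous image of a path-connected space. A connected subset of a totally disconnected space has at most one point, since connected components of $X$ are singletons and $f(K)$ lies inside one of them. So $f$ is constant.

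The consequences follow in order. For $\pi_0$: apply the claim to paths $[0,1]\to X$. They are constant, so the path components of $X$ are exactly its points. Since $\iota$ is the identity on underlying sets, $\pi_0(\iota)$ is a bijection. For $n\ge1$ and any basepoint $x$: every map $(S^n,*)\to(X,x)$ is constant because $S^n$ is path-connected, so $\pi_n(X,x)$ is trivial. The same is true of $\pi_n(X^\delta,x)$. Hence $\iota$ induces isomorphisms on all homotopy groups at all basepoints.

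No step is a real obstacle; the argument is elementary. The only point needing care is the convention for "totally disconnected": it should mean that the connected components are singletons, and then the argument uses only the fact that path-connected images are connected. In the intended application, \cref{obs:z-localized-totally-disconnected}, the statement will be used levelwise, for $X=\Hom(\Z^n,\SL_2(\Z[1/p]))$ and for the spaces of affinely commuting tuples. These are subspaces of finite products of copies of $\Z[1/p]$, and such subspaces are again totally disconnected. The lemma therefore applies to them and then feeds into the Ebert--Randal-Williams invariance of fat realization.
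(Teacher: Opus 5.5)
Your proof is correct and is essentially the paper's argument: continuous maps from a (path-)connected space into a totally disconnected space are constant, so the path components of $X$ are points and all higher homotopy groups vanish. Hence the identity induces a bijection on $\pi_0$ and isomorphisms on all $\pi_n$, which is exactly what the paper concludes. Your remark about applying the lemma levelwise is also sound: subspaces of finite products of copies of $\Z[1/p]$ are again totally disconnected.
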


\begin{proof}
Every continuous map from a connected space into a totally disconnected space is constant. Hence the path components of \(X\) are points, just as for \(X^\delta\), and all higher homotopy groups vanish. Therefore the identity map induces a bijection on path components and isomorphisms on all homotopy groups.
\end{proof}

\begin{proposition}\label[proposition]{prop:ecom-bcom-discrete-topological-weak-equivalence}
If \(G\) is totally disconnected, then we have weak equivalences
\[
    E\com G^\delta\simeq E\com G
    \quad \text{and} \quad
    B\com G^\delta\simeq B\com G.
\]
In particular,
\[
    E\com\SL_2(\Z[1/p])^\delta\simeq
    E\com\SL_2(\Z[1/p])
\]
and
\[
    B\com\SL_2(\Z[1/p])^\delta\simeq
    B\com\SL_2(\Z[1/p]).
\]
\end{proposition}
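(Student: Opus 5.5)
The plan is to apply the homotopy invariance of fat realization, \cite[Theorem 2.2]{EbertRandalWilliamsSemiSimplicialSpaces}, to the identity map of simplicial spaces $G^\delta \to G$. By the convention fixed above, $E\com G$ and $B\com G$ are fat realizations of the simplicial spaces $[n]\mapsto \Hom(\Z^n,G)$ and $[n]\mapsto \{\text{affinely commuting }(n+1)\text{-tuples}\}$, with the subspace topologies from $G^n$ and $G^{n+1}$. The identity map on underlying sets is continuous from the discrete version to the topologized version, since every map out of a discrete space is continuous. It also commutes with all face maps, because these are defined by the same formulas. So we get semisimplicial maps $X^\delta_\bullet \to X_\bullet$ in both cases, where $X^\delta_\bullet$ is exactly the simplicial set defining $E\com G^\delta$ or $B\com G^\delta$. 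It then remains only to check that these maps are levelwise weak equivalences.

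For this step I would use \cref{lem:totally-disconnected-weak-equivalence}. The needed topological facts are elementary:
\begin{itemize}
\item a finite product of totally disconnected spaces is totally disconnected, since a connected subset projects to connected, hence singleton, subsets in each factor;
\item any subspace of a totally disconnected space is totally disconnected.
\end{itemize}
Hence $G^n$ is totally disconnected, and so are its subspaces $\Hom(\Z^n,G)$ and the space of affinely commuting $(n+1)$-tuples. The map from the level-$n$ discrete set to the level-$n$ space is the identity map $Y^\delta\to Y$ for a totally disconnected $Y$, so the lemma makes it a weak equivalence. Then \cite[Theorem 2.2]{EbertRandalWilliamsSemiSimplicialSpaces} gives weak equivalences $\|X^\delta_\bullet\|\to\|X_\bullet\|$.

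There is one subtlety: the left side is the fat realization of the discrete simplicial set, while $E\com G^\delta$ and $B\com G^\delta$ were defined via the ordinary realization. A simplicial set, viewed as a simplicial discrete space, is good, since degeneracies are inclusions of discrete spaces and hence cofibrations. So Segal's comparison \cite[Theorem A.1(iv)]{Segal} identifies the fat and ordinary realizations up to homotopy equivalence. Composing these equivalences yields the claimed weak equivalences.

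For the particular case, $\SL_2(\Z[1/p])\subset \Z[1/p]^4$ is totally disconnected by the same product and subspace argument together with \cref{obs:z-localized-totally-disconnected}. The observation itself holds because between any two distinct elements of $\Z[1/p]$ there is an irrational number, which separates them. I do not expect a real obstacle. The only points needing care are:
\begin{itemize}
\item being explicit that the comparison map is a map of semisimplicial spaces, which is all the fat realization requires;
\item making the fat-versus-geometric identification on the discrete side.
\end{itemize}
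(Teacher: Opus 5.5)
Your proposal is correct and follows the paper's proof: the spaces of $n$-simplices are totally disconnected, so \cref{lem:totally-disconnected-weak-equivalence} makes the identity $G^\delta\to G$ induce a levelwise weak equivalence, and the homotopy invariance of fat realization then gives the result. Your additional remarks (the product and subspace argument, the irrational-separation proof that $\Z[1/p]$ is totally disconnected, and Segal's comparison identifying fat and ordinary realization on the discrete side) only spell out what the paper leaves implicit; the one slip, harmless, is that you list the two simplicial spaces in the reverse order relative to $E\com G$ and $B\com G$.
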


\begin{proof}
The identity map \(G^\delta\to G\) induces weak equivalences degreewise on the simplicial spaces defining \(B\com\) and \(E\com\), since in both cases the spaces of $n$-simplices are totally disconnected for each $n$. Thus the induced maps on fat realizations are weak equivalences by the homotopy-invariance of fat realization recalled above.
\end{proof}

We will now show that $B\com \SL_2(\Z[1/p])$ and $B\com \SL_2(\Z[1/p])^\delta$ are not homotopy equivalent. For this we shall turn to the quasitopological fundamental group. For a based space \(X\), let
$\pi_1^{\mathrm{qtop}}(X)$ denote the quotient \(\Omega X/{\sim}\), where \(\sim\) is the relation of based homotopy, endowed with the quotient topology. Perhaps surprisingly, this topology does not make the fundamental group into a topological group, but it is at least a quasitopological group, which means that the inverse map is continuous and that the multiplication map is continuous in each variable separately \cite{Brazas}. Paul Fabel has shown that for the Hawaiian earring $H$, $\pi_1^{\mathrm{qtop}}(H)$ is not a topological group \cite{Fabel}. 

\begin{lemma}\label[lemma]{lem:qtop-bg-discrete}
If \(G\) is discrete, then \(\pi_1^{\mathrm{qtop}}(BG)\) is discrete.
\end{lemma}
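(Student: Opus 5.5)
The plan is to use the CW structure of \(BG\). When \(G\) is discrete, \(BG\) is the geometric realization of the nerve, which is a CW complex. I would then use a general fact: for a CW complex, and more generally any locally path-connected and semilocally simply connected space, the quasitopological fundamental group is discrete. The proof has two steps. First, show that \(BG\) is locally contractible, hence locally path-connected and semilocally simply connected; CW complexes are locally contractible, which is standard. Second, show that in such a space every point of \(\Omega X/{\sim}\) is open.

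For the second step, fix a based loop \(\gamma\) and consider the subset \(U\subseteq \Omega X\) of loops homotopic to \(\gamma\). I want to show \(U\) is open in the compact-open topology. Then its image, the singleton \([\gamma]\), is open in the quotient, because \(U\) is saturated.

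Given \(\gamma\), I proceed as follows.
\begin{itemize}
\item Cover the compact image of \(\gamma\) by finitely many contractible open sets \(V_1,\ldots,V_k\), or at least path-connected sets in which loops are nullhomotopic in \(X\).
\item Choose a subdivision \(0=t_0<\cdots<t_m=1\) with \(\gamma([t_{j-1},t_j])\subseteq V_{i_j}\).
\item Refine so that each subdivision point \(\gamma(t_j)\) lies in a path-connected open set \(W_j \subseteq V_{i_j}\cap V_{i_{j+1}}\).
\end{itemize}
The compact-open neighborhood \(\mathcal N\) is then defined by the conditions \(\delta([t_{j-1},t_j])\subseteq V_{i_j}\) and \(\delta(t_j)\in W_j\).

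Now take any \(\delta\in\mathcal N\). Join \(\gamma(t_j)\) to \(\delta(t_j)\) by paths \(\alpha_j\) inside \(W_j\). Each square formed by \(\gamma|_{[t_{j-1},t_j]}\), \(\alpha_{j-1}\), \(\alpha_j\) and \(\delta|_{[t_{j-1},t_j]}\) is a loop in the contractible set \(V_{i_j}\), so it is nullhomotopic in \(X\). Concatenating these nullhomotopies gives \(\delta\simeq\gamma\) rel basepoint, taking \(\alpha_0\) and \(\alpha_m\) constant. Hence \(\mathcal N\subseteq U\), so \(U\) is open.

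The main obstacle is bookkeeping rather than an idea: one must pick \(W_j\) inside the intersection of consecutive \(V\)'s, and this requires local path-connectedness. The local contractibility of the realization of a nerve needs a citation; Hatcher's appendix on CW complexes suffices.

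Alternatively, I could bypass the general fact. Covering space theory identifies \(\pi_1^{\mathrm{qtop}}(X)\) with a fiber of the universal cover when \(X\) is a CW complex, and that fiber is discrete. However, the direct argument above is self-contained, so I would present it.
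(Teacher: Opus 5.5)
Your proposal is correct and follows the same route as the paper: $BG$ is a CW complex, hence locally path-connected and semilocally simply connected, and for such spaces $\pi_1^{\mathrm{qtop}}$ is discrete. The only difference is that the paper cites this last fact (Theorem 2.12 of Brazas), whereas you prove it directly; your argument is valid — the compact-open neighborhood built from a Lebesgue subdivision, with connecting paths $\alpha_j$ in path-connected sets $W_j$, shows each homotopy class is open.
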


\begin{proof}
When $G$ is discrete, the simplicial model of $BG$ is a CW complex, and for those $\pi_1^{\mathrm{qtop}}(X)$ is always discrete. In fact, more generally, when $X$ is locally path connected
and semilocally simply connected, then $\pi_1^{\mathrm{qtop}}(X)$ is discrete \cite[Theorem 2.12]{Brazas}.
\end{proof}

\begin{corollary}\label[corollary]{cor:qtop-bcom-discrete-version-discrete}
The group
\[
    \pi_1^{\mathrm{qtop}}\bigl(B\com\SL_2(\Z[1/p])^\delta\bigr)
\]
is discrete.
\end{corollary}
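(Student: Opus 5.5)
The plan is to deduce this directly from \cref{lem:qtop-bg-discrete} together with \cref{thm:bcom-sl2-localized-amalgam}, after one observation about which model of $B\com$ is meant. For the discrete group $G^\delta=\SL_2(\Z[1/p])^\delta$, every space $\Hom(\Z^n,G^\delta)$ is a discrete set. So the simplicial space defining $B\com G^\delta$ is a simplicial set, and both its fat and its ordinary geometric realization are CW complexes. Since discrete simplicial spaces are good, these two realizations are also homotopy equivalent by Segal's comparison theorem. Hence it does not matter which convention from \cref{sec:topologies} we use.

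The cleanest route avoids the amalgam description and applies the general principle quoted in the proof of \cref{lem:qtop-bg-discrete}. A CW complex is locally path connected and semilocally simply connected (indeed locally contractible). By \cite[Theorem 2.12]{Brazas}, $\pi_1^{\mathrm{qtop}}$ of such a space is discrete, and this holds at every basepoint, which settles the corollary.

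Alternatively, one can use \cref{thm:bcom-sl2-localized-amalgam}: $B\com\SL_2(\Z[1/p])^\delta\simeq K(\Gamma_p,1)\simeq B\Gamma_p$, where $\Gamma_p$ is a discrete group, so \cref{lem:qtop-bg-discrete} applies to $B\Gamma_p$. On this route, the one point needing care, and the only real obstacle, is that $\pi_1^{\mathrm{qtop}}$ is not a homotopy invariant in general. To transport discreteness across the equivalence, I would note that a homotopy equivalence $f\colon X\to Y$ induces a continuous bijection $\pi_1^{\mathrm{qtop}}(X)\to\pi_1^{\mathrm{qtop}}(Y)$, since $\Omega f$ is continuous. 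The homotopy inverse gives a continuous map back, after correcting for basepoint change by a path, which is continuous by the quasitopological group structure. Together these give a homeomorphism, so discreteness passes from $B\Gamma_p$ to $B\com\SL_2(\Z[1/p])^\delta$. Because the direct CW argument of the previous paragraph sidesteps this issue entirely, I would present that one as the main proof and mention the amalgam route only as a remark.
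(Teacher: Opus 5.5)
Your argument is correct, but your main route differs from the paper's. The paper's proof is the one you relegate to a remark. It combines \cref{thm:bcom-sl2-localized-amalgam}, i.e.\ $B\com\SL_2(\Z[1/p])^\delta\simeq K(\Gamma_p,1)$, with \cref{lem:qtop-bg-discrete} applied to $B\Gamma_p$. The step of transferring discreteness along that homotopy equivalence is left implicit; the homotopy invariance of $\pi_1^{\mathrm{qtop}}$ (via Brazas--Fabel) is only invoked later in the paper.

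Your main argument instead applies the Brazas criterion, the same fact underlying \cref{lem:qtop-bg-discrete}, directly to $B\com\SL_2(\Z[1/p])^\delta$, using that for a discrete group both the fat and the ordinary realization are CW complexes. This gives a more general statement: $\pi_1^{\mathrm{qtop}}(B\com H^\delta)$ is discrete for every discrete group $H$. It also needs neither the centralizer computations, nor the homotopy-colimit/amalgam identification, nor any invariance property of $\pi_1^{\mathrm{qtop}}$. The paper's route gains only that it names the discrete group in question as $\Gamma_p$.

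One correction to your remark: $\pi_1^{\mathrm{qtop}}$ \emph{is} invariant under homotopy equivalences; you essentially reprove this, and the paper relies on it. What fails is invariance under weak equivalences, which is exactly what \cref{cor:bcom-discrete-nondiscrete-weak-not-homotopy-equivalent} exploits. For transferring discreteness you also only need one direction: a continuous injection of $\pi_1^{\mathrm{qtop}}(X)$ into a discrete space already forces $\pi_1^{\mathrm{qtop}}(X)$ to be discrete.
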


\begin{proof}
This follows directly from Theorem \ref{thm:bcom-sl2-localized-amalgam} and Lemma \ref{lem:qtop-bg-discrete}.
\end{proof}

\begin{proposition}\label[proposition]{prop:qtop-bcom-topological-version-not-discrete}
The group
\[
    \pi_1^{\mathrm{qtop}}\bigl(B\com\SL_2(\Z[1/p])\bigr)
\]
is not discrete.
\end{proposition}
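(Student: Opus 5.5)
I will exhibit a sequence of loops in \(B\com\SL_2(\Z[1/p])\) (fat realization, with the subspace topology from \(\SL_2(\R)\)) that converges in the loop space \(\Omega B\com\SL_2(\Z[1/p])\) to the constant loop. The loops will represent nontrivial classes, and these classes will be distinct from the class of the constant loop. Their images in \(\pi_1^{\mathrm{qtop}}\) then converge to the identity without being eventually equal to it, so the identity is not an isolated point and the group is not discrete.

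The natural source of such loops is the unipotent direction. Take \(u_n=\begin{pmatrix}1&p^{-n}\\0&1\end{pmatrix}\in\SL_2(\Z[1/p])\). We have \(u_n\to I\) in \(\SL_2(\R)\) as \(n\to\infty\). Each \(u_n\) is a point of the space of \(1\)-simplices \(\Hom(\Z,G)=G\), so it gives a loop \(\gamma_n\colon t\mapsto [u_n,t]\) in the fat realization through the single vertex. Since \(t\mapsto[u_n,t]\) depends continuously on \((u_n,t)\in G\times\Delta^1\), we get \(\gamma_n\to\gamma_\infty\) uniformly, where \(\gamma_\infty(t)=[I,t]\). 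In the fat realization \(\gamma_\infty\) is not literally constant, because degeneracies are not collapsed, but it is a loop along the degenerate \(1\)-simplex. Its class is trivial: the \(2\)-simplex \((I,I)\) gives a relation \([\gamma_\infty]^2=[\gamma_\infty]\) in \(\pi_1\) of the fat realization. So \(\gamma_n\to\gamma_\infty\) in the compact-open topology, and \([\gamma_n]\to 1\) in \(\pi_1^{\mathrm{qtop}}\). If one insists on using the constant loop, one can instead concatenate with a fixed null-homotopy, using that multiplication in \(\pi_1^{\mathrm{qtop}}\) is continuous in each variable separately.

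It remains to show \([\gamma_n]\ne 1\). Under the weak equivalence of \cref{prop:ecom-bcom-discrete-topological-weak-equivalence}, \(\pi_1\) of \(B\com\SL_2(\Z[1/p])\) is identified with \(\pi_1(B\com G^\delta)=\Gamma_p\). Naturality of the comparison map sends \(\gamma_n\) to the corresponding edge loop in \(B\com G^\delta\). That edge loop represents \(u_n\), viewed as an element of the factor \(C_2\times\Z[1/p]\) (the centralizer of the unipotent \(u_1\)) of the amalgam in \cref{thm:bcom-sl2-localized-amalgam}. Factors inject into an amalgamated free product, and \(u_n\ne I\), so \([\gamma_n]\ne 1\). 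To justify the identification of the edge loop with the generator, I will compose with the natural map \(B\com G^\delta\to BG^\delta\). The edge loop maps to \(u_n\in\pi_1 BG^\delta=G\), which is nontrivial, and this already suffices, avoiding the amalgam entirely.

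The main obstacle is point-set. One must check that the uniform convergence \(\gamma_n\to\gamma_\infty\) really holds in the fat realization with the subspace topology. Since the quotient map \(\coprod X_k\times\Delta^k\to\|X_\bullet\|\) is continuous and \(G\times\Delta^1\) is compact-friendly in the relevant sense, this should follow from the continuity of the composite \(G\times\Delta^1\to\|X_\bullet\|\). One must also check that the bijection \(\pi_1^{\mathrm{qtop}}\) identifies with \(\Gamma_p\) only as a set, which is all we need. The key point is that in the discrete realization the points \(u_n\) are far apart, whereas here they converge.
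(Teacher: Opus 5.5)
Your argument is correct and is essentially the paper's. The paper deduces the statement from Proposition~\ref{prop:qtop-discrete-implies-group-discrete}, whose proof shows that $g\mapsto[\text{edge loop of }g]$ is a continuous injection $G\to\pi_1^{\mathrm{qtop}}(B\com G)$, so a discrete target would force the non-discrete group $\SL_2(\Z[1/p])$ to be discrete; you run the same continuity-plus-injectivity argument along the explicit sequence $u_n\to I$, getting injectivity through $B\com G^\delta\to B\com G$ and $B\com G^\delta\to BG^\delta$ rather than through $B\com G\to BG$ and $BG^\delta\to BG$ (your check that the degenerate edge loop is null-homotopic but not constant in the fat realization is a detail the paper's formulation does not need).
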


\begin{proof}
This follows from Proposition \ref{prop:qtop-discrete-implies-group-discrete}, since \(\SL_2(\Z[1/p])\) with the topology induced from \(\SL_2(\R)\) is totally disconnected but not discrete.
\end{proof}

We will use the following simple result: if \(X\) and \(Y\) are homotopy equivalent, then \(\pi_1^{\mathrm{qtop}}(X)\) and \(\pi_1^{\mathrm{qtop}}(Y)\) are homeomorphic. This is simply because continuous maps induce continuous maps between the topologized fundamental groups, and homotopic maps induce equal maps between the fundamental groups. See the discussion in the third paragraph of the introduction of \cite{BrazasFabel}.

\begin{proposition}\label[proposition]{prop:qtop-discrete-implies-group-discrete}
Let \(G\) be a totally disconnected topological group. If
\[
    \pi_1^{\mathrm{qtop}}(B\com G)
\]
is discrete, then \(G\) is discrete.
\end{proposition}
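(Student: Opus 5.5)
The plan is to argue by contrapositive, via a continuous injective map $\varphi\colon G\to\pi_1^{\mathrm{qtop}}(B\com G)$. Suppose $\pi_1^{\mathrm{qtop}}(B\com G)$ is discrete; we show that $\{e\}$ is open in $G$. Recall that $B\com G$ denotes the fat realization $\|X_\bullet\|$ of the simplicial space $X_n=\Hom(\Z^n,G)$. Since $X_0$ is a point and $X_1=\Hom(\Z,G)\cong G$, the $1$-simplices give a continuous map
\[
    \Phi\colon G\times\Delta^1\longrightarrow \|X_\bullet\|
\]
that sends $G\times\partial\Delta^1$ to the basepoint.

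\textbf{Step 1: continuity.} Since $\Delta^1=[0,1]$ is compact Hausdorff, the exponential law makes the adjoint
\[
    G\to\Omega\|X_\bullet\|,\qquad g\mapsto \gamma_g=\Phi(g,-),
\]
continuous for the compact-open topology. Composing with the quotient map $\Omega\|X_\bullet\|\to\pi_1^{\mathrm{qtop}}(\|X_\bullet\|)$ gives a continuous map $\varphi(g)=[\gamma_g]$.

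\textbf{Step 2: injectivity.} Everything above is natural in the topological group, so it applies equally to $G^\delta$, and the identity $G^\delta\to G$ induces $\iota\colon\|X^\delta_\bullet\|\to\|X_\bullet\|$ with $\iota\circ\gamma^\delta_g=\gamma_g$. By \cref{lem:totally-disconnected-weak-equivalence}, each $X_n^\delta\to X_n$ is a weak equivalence, because $X_n\subseteq G^n$ is totally disconnected. Hence $\iota$ is a weak equivalence by the homotopy invariance of fat realization, as in \cref{prop:ecom-bcom-discrete-topological-weak-equivalence}, and in particular $\iota_*$ is a bijection on $\pi_1$.

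So it suffices to show that $g\mapsto[\gamma^\delta_g]$ is injective. For this, compose with the natural map $\|X^\delta_\bullet\|\to\|N_\bullet G^\delta\|$ into the fat bar construction of the discrete group $G^\delta$. That target is a CW complex with one vertex, and its $2$-cells $(g,h)$ impose exactly the relations $\gamma_g\gamma_h\simeq\gamma_{gh}$. In the fat realization the degenerate simplex $(e)$ is not collapsed; instead the $2$-simplex $(e,e)$ forces $\gamma_e$ to be null-homotopic. From this one reads off the standard isomorphism $\pi_1\|N_\bullet G^\delta\|\cong G$, sending $[\gamma_g]\mapsto g$. Therefore $[\gamma_g^\delta]$ determines $g$, and $\varphi$ is injective.

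\textbf{Step 3: conclusion.} If $\pi_1^{\mathrm{qtop}}(B\com G)$ is discrete, then $\{\varphi(e)\}$ is open. By continuity and injectivity,
\[
    \varphi^{-1}(\{\varphi(e)\})=\{e\}
\]
is open in $G$, so the topological group $G$ is discrete.

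I expect Step 2 to be the main obstacle: the justification for identifying $\pi_1$ of the fat realizations with the combinatorial presentation (one generator per $1$-simplex, relations from $2$-simplices), and for checking that the weak equivalence $\iota$ really carries $\gamma^\delta_g$ to $\gamma_g$ on the nose. If the fat-realization bookkeeping with the uncollapsed identity loop causes trouble, I would fall back on the thin realization of the discrete nerve, which is homotopy equivalent to the fat one because it is good (Segal). I would then only use that loops of distinct group elements stay distinct in $\pi_1(BG^\delta)=G$.
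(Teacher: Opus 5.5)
Your proposal is correct and follows essentially the paper's argument: the edge-loop map $G\to\pi_1^{\mathrm{qtop}}(B\com G)$ is continuous and injective, so if $\pi_1^{\mathrm{qtop}}(B\com G)$ is discrete, its open singletons pull back to open singletons of $G$. The only variation is in how injectivity is obtained. The paper pushes forward along $B\com G\to BG$ and uses the weak equivalence $BG^\delta\to BG$, whereas you pull back along the weak equivalence $B\com G^\delta\to B\com G$ and then map to $BG^\delta$; these are the two routes around the same commutative square.
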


\begin{proof}
The \(1\)-skeleton of \(B\com G\) is \(\Sigma G\). Let
\[
    i\colon G\longrightarrow \Omega B\com G
\]
be the adjoint of the inclusion \(\Sigma G\to B\com G\). Thus \(i(g)\) is the loop represented by the edge labelled by \(g\). Let
\[
    k\colon \Omega B\com G\longrightarrow
    \Omega BG
\]
be the map induced by the canonical map \(B\com G\to BG\) (where we also use fat realization for $BG$), and let
\[
    p\colon \Omega B\com G\longrightarrow
    \Omega B\com G/{\sim}
    =\pi_1^{\mathrm{qtop}}(B\com G)
\]
and
\[
    f\colon \Omega BG\longrightarrow
    \Omega BG/{\sim}
    =\pi_1^{\mathrm{qtop}}(BG)
\]
be the quotient maps by the relation of based homotopy. With this notation we have the commutative diagram
\[
\xymatrix{
    G \ar[r]^-{i} \ar[dr] & \Omega B\com G \ar[r]^-{p} \ar[d]_-{k} & (\Omega B\com G)/{\sim} \ar[d]^-{\ell} \\
    & \Omega BG \ar[r]_-{f} & (\Omega BG)/{\sim}.
}
\]
Here \(\ell\) is induced by \(k\), so that \(\ell p=fk\). Since \(G\) is totally disconnected, the identity map
\[
    G^\delta\longrightarrow G
\]
is a weak equivalence by \cref{lem:totally-disconnected-weak-equivalence}; and, again by the homotopy-invariance of fat realization, we get an induced weak homotopy equivalence $BG^\delta \to BG$. In particular, the fundamental group of $BG$ is isomorphic to $G$ as a group, just as in the $G^\delta$ case. Thus, the composite $fki : G \to (\Omega BG)/{\sim}$ is a bijection. Since $fki = lpi$, this implies $pi$ is injective and therefore
\[
    (p i)^{-1}\bigl(p(i(g))\bigr)=\{g\}.
\]
If \(\pi_1^{\mathrm{qtop}}(B\com G)\) is discrete, then the singleton \(\{p(i(g))\}\) is open, and consequently its inverse image \(\{g\}\) under the continuous map \(pi\) is open in \(G\). Thus every singleton of \(G\) is open, and \(G\) is discrete.
\end{proof}

\begin{corollary}\label[corollary]{cor:bcom-discrete-nondiscrete-weak-not-homotopy-equivalent}
Let \(G=\SL_2(\Z[1/p])\), endowed with the topology induced from \(\SL_2(\R)\). Then the spaces
\[
    B\com G^\delta
    \quad \text{and} \quad
    B\com G
\]
are weakly homotopy equivalent, but they are not homotopy equivalent.
\end{corollary}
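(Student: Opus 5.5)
The corollary combines two results already in hand. The positive half, weak equivalence, is \cref{prop:ecom-bcom-discrete-topological-weak-equivalence} applied to $G=\SL_2(\Z[1/p])$. That proposition needs $G$ to be totally disconnected, and this follows from \cref{obs:z-localized-totally-disconnected}. Indeed, $G$ sits inside $\Z[1/p]^4\subset\R^4$, and a product of totally disconnected spaces is totally disconnected, as is any subspace of one. So the identity $G^\delta\to G$ induces levelwise weak equivalences of the simplicial spaces defining $B\com$. Homotopy invariance of fat realization then gives a weak equivalence $B\com G^\delta\to B\com G$.

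For the negative half I argue by contradiction using the quasitopological fundamental group. Suppose $B\com G^\delta\simeq B\com G$ is a genuine homotopy equivalence. As recalled just before \cref{prop:qtop-discrete-implies-group-discrete}, homotopy equivalent spaces have homeomorphic $\pi_1^{\mathrm{qtop}}$. One point needs care: the equivalence may not be based, so I would use that $\pi_1^{\mathrm{qtop}}$ at different basepoints in the same path component are homeomorphic. Both spaces here are path connected, since $B\com$ of a group is connected. By \cref{cor:qtop-bcom-discrete-version-discrete}, $\pi_1^{\mathrm{qtop}}(B\com G^\delta)$ is discrete. Note that $B\com G^\delta$ defined via fat realization is still a CW complex, homotopy equivalent to the ordinary realization because the discrete simplicial set is good, so \cref{lem:qtop-bg-discrete} still applies. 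Hence $\pi_1^{\mathrm{qtop}}(B\com G)$ would be discrete. By \cref{prop:qtop-discrete-implies-group-discrete}, $G$ would then be discrete.

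It remains to show that $G$ is not discrete in the subspace topology from $\SL_2(\R)$. Take the unipotent matrices $\begin{pmatrix}1&p^{-n}\\0&1\end{pmatrix}\in G$. These converge to $I$ in $\SL_2(\R)$ and are all distinct from $I$, so $\{I\}$ is not open. Equivalently, \cref{prop:qtop-bcom-topological-version-not-discrete} already records this conclusion. This contradiction finishes the proof.

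The main obstacle is the basepoint and realization bookkeeping in the second paragraph. Specifically, one must check that a free homotopy equivalence yields a homeomorphism of $\pi_1^{\mathrm{qtop}}$, via change-of-basepoint maps being homeomorphisms, which holds because concatenation with a fixed path is continuous. One must also check that the discreteness statement for the discrete group survives the switch to fat realization. Both points are routine, and everything else is direct citation.
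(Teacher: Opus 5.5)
Your proposal is correct and follows the paper's proof essentially step for step. The weak equivalence comes from total disconnectedness plus homotopy invariance of fat realization, and the failure of homotopy equivalence comes from comparing discreteness of $\pi_1^{\mathrm{qtop}}$ via the Brazas--Fabel invariance. The extra bookkeeping you supply (passing from free to based homotopy equivalences, the CW structure of the fat realization for $G^\delta$, and the explicit unipotent sequence showing $G$ is not discrete) fills in details the paper leaves implicit without changing the argument.
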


\begin{proof}
By \cref{obs:z-localized-totally-disconnected}, the group \(G\) is totally disconnected. Therefore \cref{prop:ecom-bcom-discrete-topological-weak-equivalence} implies that the natural comparison map
\[
    B\com G^\delta\longrightarrow B\com G
\]
is a weak homotopy equivalence.

On the other hand, \cref{cor:qtop-bcom-discrete-version-discrete} shows that
\[
    \pi_1^{\mathrm{qtop}}(B\com G^\delta)
\]
is discrete, while \cref{prop:qtop-bcom-topological-version-not-discrete} shows that
\[
    \pi_1^{\mathrm{qtop}}(B\com G)
\]
is not discrete. If \(B\com G^\delta\) and \(B\com G\) were homotopy equivalent, then the theorem of Brazas--Fabel recalled above would imply that these two quasi-topological fundamental groups are homeomorphic. This is impossible, since discreteness is a topological invariant. Hence the two spaces are not homotopy equivalent.
\end{proof}

\begin{remark}
    For the version of these spaces constructed using the geometric realization rather than the fat realization, it seems very likely that $B^{geo}\com \SL_2(\Z[1/p])$ and $B\com \SL_2(\Z[1/p])^\delta$ are not even weakly equivalent, and indeed, Lawson's example above leads us to think that $\pi_1(B^{geo}\com \SL_2(\Z[1/p]))$ most likely contains copies of the fundamental group of the Hawaiian earring, and thus it is not even countable.
\end{remark}

\begin{question} What about topological groups with more intricate topologies?
It would be interesting to carry out this kind of comparison between $B\com G^\delta$ and $B\com G$ (using, as we recommend, the fat realization to define them) for topological groups with a significant topology that is somewhere between a Lie group topology and ``dust'', that is, totally disconnected.
\end{question}

\bibliographystyle{alpha}
\bibliography{myblib}

@article{AGS,
  title={The classifying space for commutativity of geometric orientable 3-manifold groups},
  author={Antol{\'\i}n-Camarena, Omar and Garc{\'\i}a-Hern{\'a}ndez, Luis Eduardo and Salda{\~n}a, Luis Jorge S{\'a}nchez},
  journal={Topology and its Applications},
  volume={373},
  pages={109484},
  year={2025},
  publisher={Elsevier}
}

@Article{AdemCohen2007,
  Title                    = {Commuting elements and spaces of homomorphisms},
  Author                   = {Adem, Alejandro and Cohen, Frederick R.},
  Journal                  = {Math. Ann.},
  Year                     = {2007},
  Number                   = {3},
  Pages                    = {587--626},
  Volume                   = {338},

  Doi                      = {10.1007/s00208-007-0083-4},
  Fjournal                 = {Mathematische Annalen}
}

@Article{AdemGomez2015,
  Title                    = {A classifying space for commutativity in {L}ie groups},
  Author                   = {Adem, Alejandro and G\'omez, Jos\'e Manuel},
  Journal                  = {Algebr. Geom. Topol.},
  Year                     = {2015},
  Number                   = {1},
  Pages                    = {493--535},
  Volume                   = {15},

  Doi                      = {10.2140/agt.2015.15.493},
  Fjournal                 = {Algebraic \& Geometric Topology}
}

@Book{Ha02,
  Title                    = {Algebraic topology},
  Author                   = {Hatcher, Allen},
  Publisher                = {Cambridge University Press, Cambridge},
  Year                     = {2002},

  ISBN                     = {0-521-79160-X; 0-521-79540-0},
  Mrclass                  = {55-01 (55-00)},
  Mrnumber                 = {1867354 (2002k:55001)},
  Mrreviewer               = {Donald W. Kahn},
  Pages                    = {xii+544}
}

@article{Segal,
title = {Categories and cohomology theories},
journal = {Topology},
volume = {13},
number = {3},
pages = {293-312},
year = {1974},
issn = {0040-9383},
doi = {https://doi.org/10.1016/0040-9383(74)90022-6},
url = {https://www.sciencedirect.com/science/article/pii/0040938374900226},
author = {Graeme Segal}
}

@misc{Lawson,
    TITLE = {Is the geometric realization of a level-wise weak equivalence a weak equivalence?},
    AUTHOR = {Tyler Lawson (https://mathoverflow.net/users/360/tyler-lawson)},
    HOWPUBLISHED = {MathOverflow},
    NOTE = {URL:https://mathoverflow.net/q/171423 (version: 2014-06-09)},
    EPRINT = {https://mathoverflow.net/q/171423},
    URL = {https://mathoverflow.net/q/171423}
}

@article{Brazas,
title = {The topological fundamental group and free topological groups},
journal = {Topology and its Applications},
volume = {158},
number = {6},
pages = {779-802},
year = {2011},
issn = {0166-8641},
doi = {https://doi.org/10.1016/j.topol.2011.01.022},
url = {https://www.sciencedirect.com/science/article/pii/S0166864111000368},
author = {Jeremy Brazas},
}

@article{Fabel,
author = {Paul Fabel},
journal = {Bulletin of the Polish Academy of Sciences. Mathematics},
language = {eng},
number = {1},
pages = {77-83},
title = {Multiplication is Discontinuous in the Hawaiian Earring Group (with the Quotient Topology)},
url = {http://eudml.org/doc/281130},
volume = {59},
year = {2011},
}

@article{BrazasFabel,
  title={On fundamental groups with the quotient topology},
  author={Brazas, Jeremy and Fabel, Paul},
  journal={Journal of Homotopy and Related Structures},
  volume={10},
  number={1},
  pages={71--91},
  year={2015},
  publisher={Springer}
}

@article{AdemCohenTorresGiese,
  author  = {Adem, Alejandro and Cohen, Frederick R. and Torres Giese, Enrique},
  title   = {Commuting elements, simplicial spaces and filtrations of classifying spaces},
  journal = {Mathematical Proceedings of the Cambridge Philosophical Society},
  volume  = {152},
  number  = {1},
  pages   = {91--114},
  year    = {2012},
  doi     = {10.1017/S0305004111000739}
}

@article{AntolinGritschacherVillarreal2023,
  title={Higher generation by abelian subgroups in Lie groups},
  author={Antol{\'\i}n-Camarena, Omar and Gritschacher, Simon and Villarreal, Bernardo},
  journal={Transformation Groups},
  volume={28},
  number={4},
  pages={1375--1390},
  year={2023},
  publisher={Springer}
}

@article{EbertRandalWilliamsSemiSimplicialSpaces,
  author  = {Ebert, Johannes and Randal-Williams, Oscar},
  title   = {Semi-simplicial spaces},
  journal = {Algebraic {\&} Geometric Topology},
  volume  = {19},
  number  = {4},
  pages   = {2099--2150},
  year    = {2019},
  doi     = {10.2140/agt.2019.19.2099}
}

@book{SerreTrees,
  author    = {Serre, Jean-Pierre},
  title     = {Trees},
  translator = {Stillwell, John},
  publisher = {Springer-Verlag},
  address   = {Berlin},
  year      = {1980},
  series    = {Springer Monographs in Mathematics}
}

@book{NeukirchANT,
  author    = {Neukirch, J\"urgen},
  title     = {Algebraic Number Theory},
  series    = {Grundlehren der mathematischen Wissenschaften},
  volume    = {322},
  publisher = {Springer-Verlag},
  address   = {Berlin},
  year      = {1999}
}
\end{document}